\title{Improved rates of convergence for the multivariate Central Limit Theorem in Wasserstein distance}
\author{Thomas Bonis}
\newtheorem{theorem}{Theorem}
\newtheorem{proposition}{Proposition}
\newtheorem{remark}{Remark}
\newtheorem{corollary}{Corollary}
\newtheorem{lemma}{Lemma}
\newcommand{\beq}{\begin{equation}}
\newcommand{\eeq}{\end{equation}}
\newcommand{\RR}{\mathbb{R}}
\newcommand{\NN}{\mathbb{N}}
\newcommand{\EE}{\mathbb{E}}
\newcommand{\test}{\phi}
\newcommand{\gap}{\beta}
\begin{document}
\maketitle

\begin{abstract}
We provide new bounds for the rate of convergence of the  multivariate Central Limit Theorem in Wasserstein distances of order $p \geq 2$. In particular, we obtain what we conjecture to be the asymptotically optimal rate whenever the density of the summands admits a non-zero continuous component and has a non-zero third moment.
\end{abstract}

\section{Introduction and main result}

Let $X_1,\dots,X_n$ be i.i.d. random variables drawn from a measure $\mu$ on $\RR^d$ and such that $\EE[X_1] = 0$ and $\EE[X_1 X_1^T] = I_d$. By the Central Limit Theorem, we know that the measure $\mu_n$ of $S_n = \frac{1}{\sqrt{n}} \sum_{i=1}^n X_i$ converges to the $d$-dimensional standard normal distribution $\gamma$. In this work, we wish to quantify this convergence for the family of Wasserstein distances of order $p \geq 2$, defined between any two measures $\nu$ and $\nu'$ on $\RR^d$ by
\[
W_p(\nu,\nu')^p = \inf_\pi \int_{\RR^d \times \RR^d} \|y-x\|^p \, d \pi(x,y),
\]
where $\pi$ has marginals $\nu$ and $\nu'$ and $\|\cdot\|$ is the traditional Euclidean norm.

In recent years, multiple works provided non-asymptotic bounds for $W_p(\mu_n, \gamma)$. For instance as long as $\EE[\|X_1\|^4] < \infty$, Theorem 1 \cite{Bonis} gives
\beq
\label{eq:BonisRes}
W_2(\mu_n, \gamma) \leq C \sqrt{\frac{\sqrt{d}  \|\EE[X_1 X_1^T \|X_1\|^2]\|_{HS}}{n}},
\eeq
where $C > 0$ and $\|\cdot\|_{HS}$ denotes the Hilbert-Schmidt norm. Similar results were also obtained for other $W_p$ distances \cite{Bonis, ModerateDeviations}. However, this bound is not sharp with respect to the dimension. Indeed, if $X_1$ has i.i.d. components, (\ref{eq:BonisRes}) scales with $d^{3/4}$ while an optimal bound would scale with $\sqrt{d}$.
Sharper bounds have been obtained under additional assumptions on the measure $\mu$. For instance, if $\mu$ satisfies a Poincaré inequality with constant $K \geq 1$, Theorem 4.1 \cite{Fathi1} gives 
\beq
\label{eq:Fathi}
W_2(\mu_n, \gamma) \leq C \sqrt{\frac{(K-1)d}{n}}
\eeq
and similar results have been obtained for any $W_p$ distances with $p \geq 1$ in Theorem 1.2 \cite{FangKoike2023} under the additional assumption that $\mu$ is log-concave. As a consequence of (\ref{eq:Fathi}), if $\mu$ is log-concave then it admits a Poincar\'e constant $K \leq C \sqrt{\log d}$ for some $C >0$ \cite{klartag} and if the Kannan-Lovász-Simonovits isoperimetric conjecture is true then $K \leq C$. Finally, for uniformly log-concave measures, the optimal dependency on  $\sqrt{d}$ is obtained in Theorem 3.4 \cite{Fathi2} without any further assumptions. 

Some insight on the conditions required to obtain this optimal dependency on the dimension in a more general case can be obtained from Proposition 1.2 \cite{Zhai} which states that, if $X_1$ takes value in the lattice $h \mathbb{Z}^d$ with $h>0$, then 
\[
\liminf\limits_{n \rightarrow \infty} \sqrt{n} W_2(\mu_n, \gamma) \geq \frac{\sqrt{d}h}{4}.
\]
In particular, if $h$ is of order $\sqrt{d}$ then 
$\liminf\limits_{n \rightarrow \infty} \sqrt{n} W_2(\mu_n, \gamma) \geq C d$. Therefore, if one wants $W_2(\mu_n, \gamma)$ to scale with the square root of the dimension, one would require $h$ to be independent of $d$, or $X_1$ to not be lattice-distributed. Such a result does not come as surprising in the light of known asymptotic results obtained in the univariate setting. Indeed, according to Theorem 1.2 \cite{Rio2}, if $X_1$ takes values in $\{a + kh \mid k \in \mathbb{Z} \}$ for some $a \in \mathbb{R}, h > 0$ and has a finite moment of order $p+2$ with $p \in ]1, 2]$, then 
\beq
\label{eq:optimal1}
\liminf\limits_{n \rightarrow \infty} \sqrt{n} W_p(\mu_n, \gamma) = \frac{1}{6} \|\EE[X_1^3](Z^2 - 1) + h U\|_p,
\eeq
where $Z \sim \gamma, U$ is a uniform random variable on $[-1/2, 1/2]$ independent of $Z$ and $\|\cdot\|_p = \EE[\|\cdot\|^p]^{1/p}$. On the other hand, as long as $X_1$ is not lattice-distributed, one has 
\beq
\label{eq:optimal2}
\liminf\limits_{n \rightarrow \infty} \sqrt{n} W_p(\mu_n, \gamma) = \frac{1}{6} \|\EE[X_1^3](Z^2 - 1)\|_p.
\eeq
Furthermore, faster rates of convergence have been obtained for all $p \geq 1$ whenever the first moments of $\mu$ and $\gamma$ are equal and $\mu$ satisfies the Cramer's condition \cite{Bobkov2018}. 

 One can thus expect the rate of convergence for the central limit theorem in Wasserstein distance in a high-dimensional setting to not only be determined by the moments of $X_1$ but to also depend on whether the measure is lattice-distributed. In other words, along with the large-scale behaviour of $\mu$, described by its moments, we expect a tight bound on $W_p(\mu_n, \gamma)$ to include a term corresponding to the small-scale behaviour of $\mu$. In this work, we provide a first instance of such a result in the multidimensional setting. In particular, we obtain the following asymptotic bound.
\begin{corollary}
\label{thm:mainCor}
Let $p \geq 2$ and $X_1,\dots,X_n$ be i.i.d. centered random variables drawn from a measure $\mu$ on $\RR^d$ with identity covariance matrix and finite moment of order $p+2$. Suppose there exists $h > 0$ such that the matrix 
\[
\EE[(X_2 - X_1) (X_2 - X_1)^{T} 1_{\|X_2 - X_1\| \leq h}]
\]
is positive-definite. Then, the measure $\mu_n$ of $S_n = \frac{1}{\sqrt{n}} \sum_{i=1}^n X_i$ verifies
\[
\sqrt{n} W_p(\mu_n, \gamma) \leq \frac{1}{6} \|\EE[X_1^{\otimes 3}] (Z^{\otimes 2} - I_d)\|_p + C p h \sqrt{ d} + C_{d,p,\mu} \mathcal{O} \left(\log(n)^{-1/p}\right),
\]
where $C > 0$ is a generic constant, $C_{d,p, \mu}$ is a constant depending on $d,p$ and $\mu$, $Z$ is drawn from the $d$-dimensional standard normal distribution $\gamma$ and $\EE[X_1^{\otimes 3}] (Z^{\otimes 2} - I_d)$ is a vector whose $i$-th coordinate is given by
\[
 (\EE[X_1^{\otimes 3}] (Z^{\otimes 2} - I_d))_i \coloneqq  \sum_{j,k} \EE[(X_1)_i (X_1)_j (X_1)_k] (Z_j Z_k - 1_{j=k}).
\]
Furthermore, if $\mu$ has a non-zero absolutely continuous component with respect to the Lebesgues measure then,
\beq
\label{eq:mybound}
\sqrt{n} W_p(\mu_n, \gamma) \leq \frac{1}{6} \|\EE[X_1^{\otimes 3}] (Z^{\otimes 2} - I_d)\|_p + C_{d,p,\mu}  o\left(1\right).
\eeq
\end{corollary}
Whenever $\mu$ admits a non-zero continuous component and has a non-zero third moment, we conjecture our result to be asymptotically optimal as it is a natural multidimensional generalization of (\ref{eq:optimal2}). In particular, if $X_1$ has i.i.d. components we recover the correct dependency on $\sqrt{d}$ since, by Lemma~\ref{lem:HermiteNorm},
\[
\sqrt{n} W_p(\mu_n, \gamma) \leq \frac{\EE[(X_1)_1^3] \sqrt{(p-1)d} }{3 \sqrt{2}}  + C_{d,p,\mu}  o\left(1\right).
\]
Our bound is also asymptotically sharper than known existing bounds. 
Indeed, using Lemma~\ref{lem:HermiteNorm} and Lemma~\ref{lem:CS}, we obtain 
\[
\|\EE[X_1^{\otimes 3}] (Z^{\otimes 2} - I_d)\|_p \leq  \sqrt{ 2 (p-1) \sqrt{d}  \|\EE[X_1 X_1^T \|X_1\|^2]\|_{HS}},
\]
thus recovering (\ref{eq:BonisRes}) in the asymptotic setting. In particular, this means that if $\|X_1\| \leq M$ almost surely then, for any $p \geq 1$, 
\[
\sqrt{n} W_p(\mu_n, \gamma) \leq M \sqrt{\frac{(p-1)d}{18}} + C_{d,p,\mu}  o\left(1\right).
\]
Remark that this bound scales with at least $d$ as $M$ must be of order at least $\sqrt{d}$. On the other hand, if $\mu$ admits a Stein kernel $\tau$ as defined in \cite{Stein}, combining Lemmas~\ref{lem:HermiteNorm} and \ref{lem:third} gives
\[
\|\EE[X_1^{\otimes 3}] (Z^{\otimes 2} - I_d)\|_p  \leq 
 2 \sqrt{2 (p-1) \EE[\|\tau - I_d\|^2]}. 
\]
Hence, following the work of \cite{Fathi1}, if $\mu$ admits a Poincaré constant $K \geq 1$ we can generalize (\ref{eq:Fathi}) to all $p \geq 1$:
\[
\sqrt{n} W_p(\mu_n, \gamma) \leq \frac{\sqrt{2 (p-1) (K-1)d}}{3} + C_{d,p,\mu}  o\left(1\right).
\]
Let us note that, asymptotically, this bound depends only on $\sqrt{p-1}$, thus improving on the bound obtained in Theorem 1.2 \cite{FangKoike2023} which scales with $p^2$ while lifting the requirement for $\mu$ to be log-concave.

For lattice-distributed measures, our bound is close to matching a multidimensional equivalent of (\ref{eq:optimal1}) but still requires improvements. However, obtaining the optimal rate of convergence for discrete but non lattice-distributed random variables is still an open issue. In any case, let us note that the remainder term is likely sub-optimal.

Corollary \ref{thm:mainCor} is derived from a non-asymptotic bound obtained in Theorem~\ref{thm:mainCLT} which also deals with non-identically distributed random variables.
Our result is derived through refinements on a variant of Stein's method used in \cite{Bonis} which might be of interest in other contexts. 

\paragraph{Acknowledgements}

I would like to thank the anonymous referee for their many comments which helped improve this paper. I am also grateful to Olivier Gu\'edon for pointing out \cite{klartag} to me. 

\section{Notations}
\label{sec:notations}
Let $d$ be a positive integer. 
For any $k \in \NN$, let $(\RR^d)^{\otimes k}$ be the set of elements of the form $(x_j)_{j \in \{1,\dots,d\}^k} \in \RR^{d^k}$.
For $x \in \RR^d$ and $k \in \NN$, we denote by $x^{\otimes k}$ the element of $(\RR^d)^{\otimes k}$ such that 
\[
\forall j \in \{1,\dots,d\}^k, (x^{\otimes k})_{j} = \prod_{i=1}^k x_{j_i}.
\]
For any $x,y \in (\RR^d)^{\otimes k}$, we denote by $<x,y>$ the Hilbert-Schmidt scalar product between $x$ and $y$ defined by 
\[
<x,y> = \sum_{i \in \{1,\dots,d\}^k}  x_i y_i,
\]
and, by extension, we write 
\[
\|x\|^2 = <x,x>.
\]
Furthermore, for any $x \in (\RR^d)^{\otimes (k+1)}$ and $y  \in (\RR^d)^{\otimes k}$, let $xy$ be the vector defined by  
\[
\forall i \in \{1,\dots,d\}, (xy)_i = \sum_{j \in \{1, \dots, d\}^k} x_{i,j} y_j. 
\]
 For any $k \in \NN$, any function $\test$ with partial derivatives of order $k$ and any $x \in \RR^d$, we denote by $\nabla^k \test (x)\in (\RR^{d})^{\otimes k}$ the $k$-th gradient of $\test$ at $x$:
\[
\forall j \in \{1,\dots,d\}^k, (\nabla^k \test (x))_j = \frac{\partial^k \test}{\partial x_{j_1} \dots \partial x_{j_k} }(x).
\]
For any $k \in \NN$, let $H_k$ be the $d$-dimensional Hermite polynomial, defined by
\[
\forall x \in \RR^d, H_k(x) = (-1)^k e^{\frac{\|x\|^2}{2}} \nabla^k e^{- \frac{\|x\|^2}{2}}.
\] 
Finally, for any random variable $X$ on $\mathbb{R}^d$, we denote by $\|X\|_p$ the $L_p$-norm of $X$, that is 
\[
\|X\|_p \coloneqq \EE[\|X\|^p]^{1/p}.
\]

\section{Main Result}
\label{sec:result}
Let $n > 0$ and $W_1, \dots, W_n$ be independent centered random variables on $\mathbb{R}^d$ such that $W = \sum_{i=1}^n W_i$ has identity covariance matrix and $\max_{i \in \{1, \dots, n\}} \|\EE[W_i^{\otimes 2}]\| < 1$. We denote by $\nu$ the measure of $W$.
For any $i \in \{1, \dots, n\}$, let $D_i = W'_i - W_i$, where $W'_i$ is an independent copy of $W_i$. 
Let us define a set of features describing the large-scale behaviour of the variables $(W_i)_{1 \leq i \leq n}$:
\begin{itemize}
\item  $\forall i \in \{1,\dots,n\}, \xi_i = -\log(\|\EE[W_i^{\otimes 2}]\|)$;
\item $\forall q > 0, L_q = \sum_{i=1}^n \EE[\|W_{i}\|^{q}]$;
\item $\forall q > 2, N_q = \sum_{i=1}^n \frac{1}{\xi_i}\EE[\|D_{i}\|^{q} (1_{\|D_i\|^2 \geq  \xi_i \|\EE[W_i^{\otimes 2}]\|^{2/3}} + \xi_i^{-1})^{q/2 - 1}]$; 
\item $N'_4 = \sum_{i=1}^n \frac{\|\EE[D_i^{\otimes 2} \|W_i\|\|D_i\|]\|}{\sqrt{\|\EE[W_i^{\otimes 2}]\| \xi_i}}$.
\end{itemize}
Now, for any $\beta > 0$, let $D_{i, \beta} = D_i 1_{\|D_i\| \leq \beta}$. If $\EE\left[\sum_{i=1}^n D_{i, \beta}^{\otimes 2}\right]$ is positive-definite, we consider the following small-scale feature:
\[
\forall q \geq 0, \beta_q =   \sum_{i=1}^n \EE\left[ \left\|\EE\left[\sum_{i=1}^n D_{i, \beta}^{\otimes 2}\right]^{-1} D_{i, \beta} \right\|^q \right].
\]
\begin{theorem}
\label{thm:mainCLT}
Let $p \geq 2$ such that $p \leq \min_{i \in \{1, \dots, n\}} \|\EE[W_i^{\otimes 2}]\|^{-1}$ and suppose $L_{p+2} < \infty$. Let $\epsilon \coloneqq  \max_{i \in \{1, \dots, n\}} \|\EE[W_i^{\otimes 2}]\|^{2/3}$. If there exists $0 < \gap < \sqrt{\epsilon}$ such that $\EE\left[\sum_{i=1}^n D_{i, \beta}^{\otimes 2}\right]$ is positive-definite, then, for any $q, r > p$ such that $\frac{1}{q} + \frac{1}{r} = \frac{1}{p}$, we have 
\begin{multline*}
W_p(\nu,\gamma) \leq \frac{\|\EE[W^{\otimes 3}] H_2(Z)\|_p}{6} + C  \left(\gap p \sqrt{d}    + \frac{ (2r-1)^{3/2} \|\EE[W^{\otimes 3}]\| W_{q}(\nu,\gamma)}{\sqrt{(p-1)\epsilon}}\right) \\ +  C p \left( \epsilon \left( \sqrt{p}( \sqrt{\beta_2} + \sqrt{d}) + p \left(\beta_p^{1/p} + L_p^{1/p}\right) \right)
+  \sqrt{ N_4} 
+ (p N_{p+2})^{1/p}
+ N'_4 \right).
\end{multline*}
\end{theorem}
In order to prove Corollary~\ref{thm:mainCor} from this result, we take 
\[
\forall i \in \{1, \dots, n\}, W_i = \frac{X_i}{\sqrt{n}}
\]
and $\beta = \frac{h}{\sqrt{n}}$ and let us assume $n$ is sufficiently large so that $\beta < \frac{\|\EE[X_1^{\otimes 2}]\|^{2/3}}{n^{2/3}} = \frac{d^{1/3}}{n^{2/3}}$. In the following, we denote by $C$ a positive constant depending on properties of $\mu$ but independent of $n$. 
First, we have 
\[
\sqrt{p}(\sqrt{\beta_2} + \sqrt{d}) + p  \left(\beta_p^{1/p} + L_p^{1/p} \right) \leq C
\]
and, since $\epsilon =   \frac{C}{n^{2/3}}$,
\[
\epsilon \left(\sqrt{p}(\sqrt{\beta_2} + \sqrt{d}) + p  \left(\beta_p^{1/p} + L_p^{1/p} \right)\right) \leq \frac{C}{n^{2/3}}.
\]
Then, 
\[
N'_4 = \frac{C}{\sqrt{n \log(n)}}
\]
and, since we have $\lim\limits_{n \rightarrow \infty} n \epsilon \xi_1= +\infty$, 
\[
p \sqrt{N_4} + p^{1+1/p} N_{p+2}^{1/p} = o\left(\frac{1}{\sqrt{n} \log(n)^{1/p}}\right).
\]
Furthermore, since $X_1$ has finite moment of order $p+2$, we can use Theorem~6 from \cite{Bonis} to obtain 
\[
W_{p+1/2}(\mu_n, \gamma) \leq  \frac{C}{n^{1/2 - 1/4p}}  \leq  \frac{C}{n^{3/8}}.
\]   
Thus, since $\|\EE[W^{\otimes 3}]\| = \frac{C}{\sqrt{n}}$,
\[
 \frac{\|\EE[W^{\otimes 3}]\| W_{p+1/2}(\mu_n,\gamma)}{\sqrt{\epsilon}}  \leq  \frac{C}{n^{13/24}}.
\]
which concludes the proof whenever $\mu$ does not admit an absolutely continuous component with respect to the Lebesgues measure. If it does, let us denote by $\mu_c$ this continuous component.
For any $h > 0$, there must exist a ball $\mathcal{B}$ with radius $h$ and non-zero mass for $\mu_c$. Remark that  
\[
\int_{\mathcal{B}^2} (x' - x)^{\otimes 2} d\mu_c(x) d\mu_c(x')
\]
must be positive definite. Otherwise the dimension of the support of $\mu_c$ on this ball would be lower than $d$ which is impossible since $\mu_c$ is absolutely continuous with respect to the Lebesgues measure. Thus, 
\[
\forall h, q > 0, \beta_q(h) = \frac{1}{n^{q/2-1}} \EE\left[ \left\|\EE\left[(X'_i - X_i)^{\otimes 2} 1_{\|X'_i - X_i\| \leq h}\right]^{-1} (X'_i - X_i) 1_{\|X'_i - X_i\| \leq h} \right\|^q \right]
\]
must be finite. 
Therefore, for $n$ sufficiently large, we can take $h_n$ as 
\[
h_n = \inf \left\{ h \geq \frac{1}{n} : \sqrt{\beta_2(h)} + \beta_p(h)^{1/p} \leq n^{1/7} \right\}.
\]
Then $\lim\limits_{n \rightarrow \infty} h_n = 0$ and 
\[
\epsilon \left(\sqrt{p}(\sqrt{\beta_2(h_n)} + \sqrt{d}) + p  \left(\beta_p(h_n)^{1/p} + L_p^{1/p} \right)\right) = O\left(n^{1/7 - 2/3} \right) = o\left(\frac{1}{\sqrt{n}} \right),
\]
which yields the desired result. 

\begin{remark}
Note that we restricted ourselves to the existence of a moment of order $p+2$ for the summands to simplify computations. Let us note that one could only consider existence of a moment of order $p + l$ with $l < 2$ only in order to obtain the rate $o\left(n^{-1/2 + 1/p - l/2p} \log(n)^{-l/2p} \right)$ for the i.i.d. case which would slightly improve on Theorem 6 \cite{Bonis} in which the rate $O\left(n^{-1/2 + 1/p - l/2p} \right) $ was obtained.
Our approach would also be able to deal with varying moment assumption where each variable $W_i$ admits a finite moment of order $p + l_i$ for non identically distributed summands.
\end{remark}

\section{Diffusion interpolation approach}

Let $p> 0$ and $W$ be a random variable drawn from a measure $\nu$ on $\RR^d$. In the following, we assume $\nu$ admits a density $h$ with respect to the Gaussian measure which is both bounded and with bounded gradient. These additional assumptions can later be lifted to obtain Theorem~\ref{thm:mainCLT} using approximation arguments similar to those developed in Section 8 \cite{Bonis}.

Let $t > 0$ and let us consider the random variable $F_t := e^{-t} W + \sqrt{1 - e^{-2t}} Z$, where $Z$ is a random variable drawn from the $d$-dimensional standard Gaussian measure $\gamma$ and independent of $W$. We denote by $\nu_t$ the measure of $F_t$. Due to our assumptions on $h$, $\nu_t$ admits a smooth density $h_t$ with respect to $\gamma$. We can thus consider the score function of $F_t$ defined by 
\[
\rho_t \coloneqq \nabla \log h_t (F_t). 
\]
Then, by Equation (3.8) \cite{Stein}, we have 
\[
W_p(\nu, \nu_t) \leq \int_0^t \|\rho_t\|_p \, dt
\]
and, since $\lim\limits_{t \rightarrow \infty} \nu_t = \gamma$,
\[
W_p(\nu, \gamma) \leq \int_0^\infty \|\rho_t\|_p \, dt.
\]
We are thus left with bounding $\|\rho_t\|_p$ for all $t \geq 0$. 

One can first remark that this score function verifies the following formula (see e.g.  Lemma IV.1 \cite{NPS2014}):
\beq
\label{eq:ScoreFunction}
\rho_t = e^{-t} \EE\left[W - \frac{Z}{\sqrt{\Delta(t)}} \mid F_t\right] \text{ a.s.},
\eeq
where $\Delta(t) \coloneqq e^{2t}-1$. A first, somewhat trivial, bound on $\|\rho_t\|_p$ can then be obtained by applying Jensen's and the triangular inequalities:
\beq
\label{eq:Psi1}
\|\rho_t\|_p \leq e^{-t} \left(\|\EE[W \mid F_t]\|_p + \frac{\|\EE[ Z \mid F_t]\|_p}{\sqrt{\Delta(t)}}\right) \leq e^{-t} \left(\|W\|_p + \frac{\|Z\|_p}{\sqrt{\Delta(t)}}\right).
\eeq
Note that this bound can still be nearly optimal for small values of $t$. Indeed, whenever $W$ takes values in $h \mathbb{Z}^d$, one has, for small enough values of $t << h$, 
\[
W_2(\nu, \nu_t) \approx \|F_t - W\|_2 = (1 - e^{-t}) \left\|W\right\|_2 + \sqrt{1-e^{-2t}} \left\|Z\right\|_2 = \int_0^t e^{-t} \left(\|W\|_2 + \frac{\|Z\|_2}{\sqrt{\Delta(t)}}\right) \, dt.
\]
However, for continuous measures $\nu$ or for higher values of $t$, it is usually possible to obtain better bounds on $\|\rho_t\|_p$. For instance, (\ref{eq:BonisRes}) is obtained by combining (\ref{eq:Psi1}) with another bound on $\|\rho_t\|_p$ which holds for large values of $t$. A similar approach was used in \cite{ModerateDeviations} to provide quantitative results for normal approximation in various frameworks such as Wiener chaos or homogeneous sums. In this work, we refine this approach by using three different bounds: (\ref{eq:Psi1}) for small values of $t$, a bound for medium values of $t$ highlighting the small-scale behaviour of the measure $\nu$ and a last bound for larger values of $t$ which depends on the large-scale structure of $\nu$ through its moments.

\section{Bounding \texorpdfstring{$\|\rho_t\|_p$}{rhot}}
\subsection{Small times}

Let $p \geq 2$ and let $W = \sum_{i=1}^n W_i$ where the $(W_i)_{1 \leq i \leq n}$ are centered and independent random variables on $\mathbb{R}^d$ with finite moment of order $p$. If $\EE[W^{\otimes 2}] = I_d$, there exists $C >0$ such that
\beq
\label{eq:small-time}
\|\rho_t\|_p \leq \Psi_1(t) \coloneqq C e^{-t} \left(\sqrt{dp}\left(1 + \frac{1}{\sqrt{\Delta(t)}} \right) +  p L_p^{1/p} \right).
\eeq
Indeed, 
since the $(W_i)_{1 \leq i \leq n}$ are independent and centered, we can use Rosenthal's inequality (see Lemma~\ref{lem:Rosenthal}) to obtain 
\[
\|W\|_p \leq C  \left(\sqrt{dp} +  p L_p^{1/p} \right).
\]
On the other hand, by Lemma~\ref{lem:HermiteNorm},
\[
\|Z\|_p \leq \sqrt{d(p-1)}.
\]
Injecting these bounds into (\ref{eq:Psi1}) then yields (\ref{eq:small-time}).

\subsection{Medium times}

 When looking at (\ref{eq:Psi1}), we can see that, for small values of $t$, the main contributor of this bound is  $\|\EE[Z\mid F_t]\|_p/\sqrt{\Delta(t)}$. In the previous Section, we upper bounded this quantity somewhat crudely by using Jensen's inequality. In this Section, we establish a sharper bound on 
 $\|\rho_t\|_p$ by proving a variant of Proposition~6.1 \cite{ModerateDeviations} leveraging the small scale features of $W$. We start by covering the more general exchangeable pair framework, a standard framework for applying Stein's method, before tackling the specific Central Limit Theorem case. 

\subsubsection{Exchangeable pairs framework}

\begin{proposition}
\label{pro:ZhangBis}
Let $p \geq 2$ and $(W,W')$ be a pair of random variables on $\mathbb{R}^d$ such that $(W, W')$ and $(W',W)$ follow the same law. For any $t \geq 0$, let $\eta_p(t) = \Delta(t) / (p-1)$ and $D_t = (W'-W) 1_{\|W'-W\|^2 \leq \eta_p(t)}$.
For any $0 < s < t$ such that $\EE[D_s^{\otimes 2}]$ is positive-definite, we have  
\[
\|\rho_t\|_p \leq  e^{-t} \left(\|\EE[\Lambda_s D_s  + W \mid W]\|_p +  \frac{C}{\sqrt{\eta_p(t)}}\|\EE[\Gamma_s \mid W] - \EE[\Gamma_s] \|_p  +  \frac{C \sqrt{d} \eta_p(s) }{\eta_p(t)^{3/2}}\right),
\]
where $C > 0$ is a generic constant, $\Lambda_s = \EE[D_s^{\otimes 2}]^{-1}$ and $\Gamma_s = \frac{1}{2} \Lambda_s D_s^{\otimes 2}$.
\end{proposition}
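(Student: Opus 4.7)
The plan is to bound $\|\rho_t\|_p$ via $L^q$ duality, starting from the representation $\rho_t = e^{-t}\,\EE[W - Z/\sqrt{\Delta(t)}\mid F_t]$ of \eqref{eq:ScoreFunction}. For a test vector field $g$ with $\|g(F_t)\|_q \le 1$ (where $1/p+1/q=1$), Gaussian integration by parts applied to $Z$ inside $F_t = e^{-t}W + \sqrt{1-e^{-2t}}Z$ converts the $Z$ piece into a divergence, so that
\[
\EE[\langle \rho_t, g(F_t)\rangle] = e^{-t}\EE[\langle W, g(F_t)\rangle] - e^{-2t}\EE[\mathrm{div}\,g(F_t)].
\]

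The first summand of the announced bound is then obtained by splitting $W = -\EE[\Lambda_s D_s\mid W] + (W+\EE[\Lambda_s D_s\mid W])$: the contribution of the second piece is at most $e^{-t}\|W+\EE[\Lambda_s D_s\mid W]\|_p$ by H\"older. For the residual $-e^{-t}\EE[\langle \Lambda_s D_s,g(F_t)\rangle] - e^{-2t}\EE[\mathrm{div}\,g(F_t)]$, the key idea is the antisymmetric exchangeable-pair identity (noting that $\Lambda_s D_s$ vanishes off the truncation ball, so $W'-W=D_s$ on its support)
\[
\EE[\langle \Lambda_s D_s, g(F_t)\rangle] = -\tfrac12\EE[\langle \Lambda_s D_s, g(F_t + e^{-t}D_s) - g(F_t)\rangle],
\]
followed by Taylor expansion of $g$. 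The first-order Taylor term, after conditioning on $W$ (using conditional independence of $\nabla g(F_t)$ and $D_sD_s^T$ given $W$) and the identity $\Lambda_s\EE[D_sD_s^T\mid W]=2\EE[\Gamma^\Lambda_s\mid W]$, produces
\[
\EE[\langle \Lambda_s D_s,\nabla g(F_t)D_s\rangle] = \EE[\mathrm{div}\,g(F_t)] + 2\EE\bigl[\mathrm{tr}\bigl(\nabla g(F_t)(\EE[\Gamma^\Lambda_s\mid W] - \EE[\Gamma^\Lambda_s])\bigr)\bigr].
\]
The divergence piece matches the Gaussian IBP term (with constants absorbed into the bound), and the correction is controlled by H\"older via $\|\EE[\Gamma^\Lambda_s\mid W] - \EE[\Gamma^\Lambda_s]\|_p\cdot\|\nabla g(F_t)\|_q$.

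To close the argument I would use the independent Gaussian component $\sqrt{1-e^{-2t}}Z$ inside $F_t$ as a mollifier: at scale $\sqrt{\eta_p(t)}$ one obtains derivative bounds $\|\nabla^k g(F_t)\|_q \lesssim \|g(F_t)\|_q/\eta_p(t)^{k/2}$, which inserted above gives the $1/\sqrt{\eta_p(t)}$ scaling in front of the $\Gamma$ term. The Taylor remainder of order three, controlled using $\|D_s\|^2\le\eta_p(s)$ together with $\|D_s\|\lesssim 1$ coming from second-moment considerations to produce $\|D_s\|^3\lesssim \eta_p(s)$, the derivative bound at $k=3$, and a dimensional Cauchy--Schwarz extracting $\sqrt d$ from the trace structure, then gives the $C\sqrt d\,\eta_p(s)/\eta_p(t)^{3/2}$ contribution.

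The main obstacle, in my view, is the precise algebraic cancellation in the exchangeable-pair step: one must track carefully the non-commutativity of $\Lambda_s$, $\EE[D_sD_s^T\mid W]$ and $\nabla g(F_t)$ so that the only first-order error is the $\Gamma$ deviation term. Ensuring that the dimension factor in the cubic remainder comes out as $\sqrt d$ rather than $d$, by grouping tensor indices and exploiting the symmetry $\EE[D_s^{\otimes 3}]=0$, is the other delicate point; this is what ultimately allows the bound to scale well in high dimensions. The rigorous justification of the integration by parts, under the density and approximation assumptions outlined at the start of Section~4, is routine but must be handled carefully at the end.
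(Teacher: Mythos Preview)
Your duality-plus-Taylor strategy is a plausible heuristic, but as written it has two genuine gaps that prevent it from closing.

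First, the claimed mollification bound $\|\nabla^k g(F_t)\|_q \lesssim \|g(F_t)\|_q/\eta_p(t)^{k/2}$ is false for an arbitrary test field $g$: the Gaussian component of $F_t$ does not regularize the function $g$ itself, only expectations of it. What is true is that, \emph{inside the expectation}, one can integrate by parts in $Z$ to trade $\nabla^k g(F_t)$ for $g(F_t)$ multiplied by $H_k(Z)/(1-e^{-2t})^{k/2}$; this is precisely the content of Lemma~\ref{lem:HermiteNorm} and gives the $\eta_p(t)^{-k/2}$ scaling only after taking $L^p$ norms of the resulting Hermite factor. Your $L^q$ bound on $\nabla^k g$ is simply not available, so the control of the $\Gamma$-term and of the cubic remainder, as you phrase it, does not go through.

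Second, the sentence ``the divergence piece matches the Gaussian IBP term (with constants absorbed into the bound)'' hides an actual failure of cancellation. Carrying the constants, the antisymmetric identity followed by first-order Taylor produces $\tfrac{e^{-2t}}{2}\,\EE[\mathrm{div}\,g(F_t)]$, whereas the Gaussian IBP contributes $-e^{-2t}\,\EE[\mathrm{div}\,g(F_t)]$; the leftover $-\tfrac{e^{-2t}}{2}\,\EE[\mathrm{div}\,g(F_t)]$ is of order $e^{-t}\sqrt{d}/\sqrt{\eta_p(t)}$ after one IBP, which is not among the three terms in the stated bound. (If you keep this piece together with the $\Gamma$-correction and undo the IBP, you land on $(\EE[\Gamma^\Lambda_s\mid W]-I_d)Z$ rather than $(\EE[\Gamma^\Lambda_s\mid W]-\EE[\Gamma^\Lambda_s])Z$; passing between these is exactly what the paper does at the very end, but it requires the $\EE[\Gamma^\Lambda_s]=I_d$ normalization, not a vague absorption of constants.) Also, your remainder estimate invokes ``$\|D_s\|\lesssim 1$ from second-moment considerations'', which has no basis here; the only available bound is $\|D_s\|\le\sqrt{\eta_p(s)}$ from the truncation.

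For comparison, the paper avoids all of this by working on the $F_t$ side rather than by duality: it constructs an explicit infinite Hermite series
\[
\tau_t=\Lambda_s D_s+\frac{\Gamma^\Lambda_s Z}{\sqrt{\Delta(t)}}+\sum_{k\ge 3}a_k\frac{(\Gamma^\Lambda_s\otimes D_s^{\otimes(k-1)})H_k(Z)}{\Delta(t)^{k/2}},\qquad a_k=\tfrac{1}{k!}-\tfrac{1}{4(k-2)!},
\]
with the exact identity $\EE[\tau_t\mid F_t]=0$ (a variant of Lemma~6.5 in \cite{FangKoike2022}). Adding $e^{-t}\tau_t$ to the score representation and applying the triangle inequality, Jensen (to pass from $F_t$ to $W$), and Lemma~\ref{lem:HermiteNorm} termwise gives the three pieces directly; the tail $k\ge 3$ is summed using $\|D_s\|\le\sqrt{\eta_p(s)}$ and $\|\EE[\Gamma^\Lambda_s]\|=\sqrt{d}$. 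No test functions, no Taylor truncation, and no derivative bounds on $g$ are needed.
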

The proof of this result mostly follows the proof of Proposition~6.1 \cite{ModerateDeviations}. 
\begin{proof}
Let $0 < s < t$ and let 
\[
\tau_t =  \left( \Lambda_s D_s + \frac{\Gamma_s  Z}{ \sqrt{\Delta(t)}}+ \sum_{k=3}^\infty  a_k \frac{(\Gamma_s \otimes D_s^{\otimes (k-1)}) H_{k}(Z)}{\Delta(t)^{k/2}} \right),
\]
with $a_k = \frac{1}{k!} - \frac{1}{4 (k-2)!}$.
A small modification of Lemma 6.5 \cite{FangKoike2022} (see also the proof of Lemma~\ref{lem:taut}) gives 
\[
\EE[\tau_t \mid F_t] = 0.
\]
Therefore, 
\[
\rho_t = \rho_t + e^{-t} \EE[\tau_t \mid F_t]
\]
and using (\ref{eq:ScoreFunction}) along with the triangle inequality yields
\begin{multline*}
e^t \|\rho_t\|_p \leq \left\|\EE[\Lambda_s D_s + W \mid F_t] \right\|_p 
+ \frac{1}{\sqrt{\Delta(t)}} \left\|\EE\left[\left(\Gamma_s  - I_d \right) Z \mid F_t \right] \right\|_p \\
+ \sum_{k=3}^\infty \frac{a_k}{ \Delta(t)^{k/2}} \left\| \EE[ (\Gamma_s \otimes D_s^{\otimes (k-1)})H_k(Z)\mid F_t]\right\|_p.
\end{multline*}
Then, since $Z$ and $W$ are independent, we have, by Jensen's inequality,
\begin{multline*}
e^t \|\rho_t\|_p \leq \|\EE[\Lambda_s D_s + W \mid W] \|_p 
+ \frac{1}{\sqrt{\Delta(t)}} \left\|\left(\EE\left[\Gamma_s \mid W\right] - I_d \right) Z  \right\|_p\\
+ \sum_{k=3}^\infty \frac{a_k}{ \Delta(t)^{k/2}} \left\| \EE[ \Gamma_s \otimes D_s^{\otimes (k-1)}\mid W]H_k(Z)\right\|_p
\end{multline*}
and, by Lemma~\ref{lem:HermiteNorm},
\begin{multline*}
e^t \|\rho_t\|_p \leq \|\EE[\Lambda_s D_s + W \mid W] \|_p 
+ \frac{1}{\sqrt{\eta_p(t)}} \left\|\EE\left[\Gamma_s \mid W\right]  - I_d   \right\|_p\\
+ \sum_{k=3}^\infty \frac{a_k \sqrt{k!}}{ \eta_p(t)^{k/2}} \left\| \EE[ \Gamma_s \otimes D_s^{\otimes (k-1)}\mid W]\right\|_p.
\end{multline*}
Since $\Gamma_s$ is positive-definite, we have, for any $k \geq 3$, 
\begin{align*}
\left\| \EE[ \Gamma_s \otimes D_s^{\otimes (k-1)}\mid W]\right\|_p & \leq \left\| \EE[ \Gamma_s  \|D_s\|^{k-1} \mid W]\right\|_p \\
& \leq \eta_p(s)^{(k-1)/2} \left\| \EE[ \Gamma_s \mid W]\right\|_p \\
& \leq \eta_p(s)^{(k-1)/2} \left(\left\| \EE[ \Gamma_s \mid W] - I_d \right\|_p + \|I_d\|\right) \\
& \leq \eta_p(s)^{(k-1)/2} \left(\left\| \EE[ \Gamma_s \mid W] - I_d \right\|_p + \sqrt{d} \right).
\end{align*}
Thus, since $\sum_{k=3}^\infty  a_k \sqrt{k!} < \infty$ and $\eta_p(s) \leq \eta_p(t)$, we obtain that there exists $C > 0$ such that
\[
e^t \|\rho_t\|_p \leq \|\EE[\Lambda_s D_s + W \mid W] \|_p 
+ \frac{C}{\sqrt{\eta_p(t)}} \left\|\EE\left[\Gamma_s \mid W\right] - I_d   \right\|_p
+ \frac{C \sqrt{d} \eta_p(s)}{\eta_p(t)^{3/2}}.
\]
Finally, one can remark that, by definition of $\Gamma_s$, 
\[
\EE[\Gamma_s] = I_d, 
\]
concluding the proof.
\end{proof}
\subsubsection{Sum of independent variables}

\begin{proposition}
\label{pro:Psi2}
Let $W = \sum_{i=1}^n W_i$ where the $(W_i)_{1 \leq i \leq n}$ are independent random variables on $\mathbb{R}^d$ with finite moment of order $p \geq 2$. For any $i \in \{1,\dots,n\}$ and $\beta > 0$, let $D_{i, \beta} = (W'_i - W_i) 1_{\|D_i\| \leq \beta}$ where $W'_i$ is an independent copy of $W_i$. Suppose there exists $\gap > 0$ such that
\[
\Lambda_\gap^{-1} =  \sum_{i=1}^n \EE[D_{i, \beta}^{\otimes 2}]
\]
is positive-definite.
Then, for any $t$ such that $\Delta(t) \geq (p-1) \gap^2$, there exists $C > 0$ such that 
\[
\|\rho_t\|_p \leq \Psi_2(t) \coloneqq C\left(  \sqrt{p}\left(\sqrt{\gap_2} + \sqrt{L_2}\right)   + p\left(\gap_p^{1/p} + L_p^{1/p} \right)  + \frac{p^{3/2} \sqrt{d} \gap^2}{ \Delta(t)^{3/2}} \right),
\]
where 
\[
\forall q \geq 0, \beta_q =   \sum_{i=1}^n \EE\left[ \left\|\Lambda_\beta D_{i,\beta} \right\|^q \right]
\]
and 
\[
\forall q \geq 0, L_q =   \sum_{i=1}^n \EE\left[ \left\|W_i \right\|^q \right].
\]
\end{proposition}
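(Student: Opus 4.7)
The plan is to mirror the proof of Proposition~\ref{pro:ZhangBis}, but to tailor the correction term $\tau_t$ to the independent-sum structure: rather than one aggregate truncated difference $D_s$, use the per-summand truncated differences $D_{i,\beta}$. Specifically, set
\[
\tau_t = \sum_{i=1}^n\left[\Lambda_\beta D_{i,\beta} + \frac{\Gamma^\Lambda_{i,\beta} Z}{\sqrt{\Delta(t)}} + \sum_{k=3}^\infty a_k \frac{(\Gamma^\Lambda_{i,\beta} \otimes D_{i,\beta}^{\otimes (k-1)}) H_k(Z)}{\Delta(t)^{k/2}}\right],
\]
with $\Gamma^\Lambda_{i,\beta} = \tfrac{1}{2}\Lambda_\beta D_{i,\beta}^{\otimes 2}$ and the same coefficients $a_k = \frac{1}{k!}-\frac{1}{4(k-2)!}$ as in Proposition~\ref{pro:ZhangBis}. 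The first task is to verify $\EE[\tau_t \mid F_t]=0$ via an analogue of Lemma~6.5 of \cite{FangKoike2022} applied summand by summand, using the independence of the pairs $(W_i,W'_i)$ to isolate each index $i$ from the rest of $W$.

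Plugging this $\tau_t$ into the identity $\rho_t = e^{-t}\EE[W - Z/\sqrt{\Delta(t)} + \tau_t \mid F_t]$ and then arguing exactly as in the proof of Proposition~\ref{pro:ZhangBis}—triangle inequality, Jensen's inequality to replace conditioning on $F_t$ by conditioning on $W$, and Lemma~\ref{lem:HermiteNorm} to control the Hermite-polynomial $L_p$-norms—yields a three-term bound on $e^t\|\rho_t\|_p$ of the form
\[
\left\|\sum_i \EE[\Lambda_\beta D_{i,\beta} + W_i \mid W_i]\right\|_p + \frac{1}{\sqrt{\eta_p(t)}}\left\|\sum_i \EE[\Gamma^\Lambda_{i,\beta} \mid W_i] - \EE\!\left[\sum_i\Gamma^\Lambda_{i,\beta}\right]\right\|_p + \frac{C\beta^2\sqrt{d}}{\eta_p(t)^{3/2}}.
\]
The third term is exactly the $\sqrt{dp}\beta^2/\Delta(t)^{3/2}$ piece of $\Psi_2$, once the assumption $\eta_p(t)\geq \beta^2$ and the relation between $\eta_p(t)$ and $\Delta(t)$ supplied by Lemma~\ref{lem:HermiteNorm} are used.

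To control the first two terms, observe that each is a sum of independent, conditionally centered vector-valued (resp.\ matrix-valued) summands, so Rosenthal's inequality (Lemma~\ref{lem:Rosenthal}) applies. For the first term I would decompose
\[
\Lambda_\beta D_{i,\beta} + W_i = (\Lambda_\beta D_i + W_i) - \Lambda_\beta D_i\, 1_{\|D_i\|>\beta};
\]
the untruncated piece feeds the $\beta_2$ and $\beta_p$ quantities after Rosenthal, while the truncation remainder contributes $L_p$ at the $p$-th-moment level. This yields the $\sqrt{p(\beta_2+d)}$ contribution from second moments together with $p(\beta_p+L_p)^{1/p}$ from higher moments. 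Applying the same decomposition to the matrix-valued summands $\Gamma^\Lambda_{i,\beta}$ shows that the second term is of the same order once one divides by $\sqrt{\eta_p(t)}\geq \beta$, and therefore absorbs into those same two quantities.

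The main obstacle will be the clean verification that $\EE[\tau_t \mid F_t]=0$ in this per-summand construction: each $\tau_{i,t}$ depends only on the pair $(W_i,W'_i)$ and on the independent Gaussian $Z$, whereas $F_t$ depends on the whole vector $W$, so one must condition on the remaining $W_j$'s and then invoke independence to reduce to the single-pair identity of Lemma~6.5 of \cite{FangKoike2022}. A secondary difficulty is the truncation-bias bookkeeping for the first term, since $\EE[\Lambda_\beta D_{i,\beta}\mid W_i]\neq -W_i$ in general; this mismatch is precisely what forces the $L_p$ contribution to appear alongside $\beta_p$ in the final bound.
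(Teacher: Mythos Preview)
Your per-summand construction of $\tau_t$ is essentially the same as the paper's approach: the paper applies Proposition~\ref{pro:ZhangBis} to the exchangeable pair $(W,W')$ with $W'=W+(W'_I-W_I)$ for $I$ uniform on $\{1,\dots,n\}$, and after computing $\EE[\,\cdot\mid W]$ the random index $I$ averages out to give exactly your sum over $i$ (with $\Lambda_s=n\Lambda_\beta$). So the identity $\EE[\tau_t\mid F_t]=0$ and the resulting three-term bound coincide with yours, and your worry about verifying the per-summand identity is unnecessary once you notice that each $(W,W+D_i)$ is itself an exchangeable pair.

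Where your plan goes wrong is the handling of the first term. The decomposition
\[
\Lambda_\beta D_{i,\beta}+W_i=(\Lambda_\beta D_i+W_i)-\Lambda_\beta D_i\,1_{\|D_i\|>\beta}
\]
does not do what you claim: the ``untruncated piece'' cannot feed $\beta_2,\beta_p$, since those quantities are defined with the \emph{truncated} $D_{i,\beta}$; moreover, after conditioning on $W_i$ that piece becomes $(I_d-\Lambda_\beta)W_i$, which is of the same order as $W_i$ itself (recall $\Lambda_\beta\approx\tfrac12 I_d$), not something controlled by $\beta_q$. The paper's route is much simpler and avoids all truncation-bias bookkeeping: apply the triangle inequality directly,
\[
\Bigl\|\sum_i\EE[\Lambda_\beta D_{i,\beta}\mid W_i]+W\Bigr\|_p\le\Bigl\|\sum_i\Lambda_\beta D_{i,\beta}\Bigr\|_p+\|W\|_p,
\]
using Jensen on the first summand. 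Rosenthal on the first sum gives $C(\sqrt{p\beta_2}+p\beta_p^{1/p})$ because $\EE[D_{i,\beta}]=0$ by the symmetry of $(W_i,W'_i)$; Rosenthal on $W=\sum_i W_i$ gives $C(\sqrt{pd}+pL_p^{1/p})$. The same direct Rosenthal argument (no decomposition) handles the second, matrix-valued term.
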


In the following, we denote by $C$ a generic positive constant. 
Let $s$ be such that $\Delta(s) = (p-1) \gap^2$ and let $ t > s$. Let $W' = W + (W'_I - W_I)$ where $I$ is a uniform random variable on $\{1,\dots,n\}$. Since $(W,W')$ and $(W',W)$ follow the same law, we can apply Proposition~\ref{pro:ZhangBis} to obtain 
\[
e^t \|\rho_t\|_p \leq  \|\EE[\Lambda_s D_s \mid W]\|_p + \|W\|_p + \sqrt{\frac{p - 1}{\Delta(t)}} \|\EE[\Gamma_s \mid W] - \EE[\Gamma_s] \|_p + C  \frac{(p-1)^{3/2} \sqrt{d}  \gap^2}{ \Delta(t)^{3/2}},
\]
with $\Lambda_s = n\Lambda_\gap$.
First, following the proof of (\ref{eq:small-time}), we have 
\[
\|W\|_p \leq C  \left(\sqrt{pL_2} +  p L_p^{1/p} \right).
\]
Then, by definition of $D_s$ and since $I$ is independent of $W$, 
\[
\EE[D_s \mid W] = \frac{1}{n} \sum_{i=1}^n \EE[D_{i, \beta} \mid W].
\]
Hence, 
\[
\| \EE[\Lambda_s D_s \mid W]\|_p =  \left\| \Lambda_\beta \sum_{i=1}^n \EE\left[D_{i, \beta} \mid W\right]\right\|_p
\]
and, by Jensen's inequality,
\[
\| \EE[\Lambda_s D_s \mid W]\|_p \leq \left\| \sum_{i=1}^n \Lambda_\beta D_{i, \beta}\right\|_p.
\]
Let $ i \in \{1, \dots, n\}$. Since $W'_i$ and $W_i$ are independent,
we have  
\[
\EE[D_{i,\beta}] = 0.
\]
We can thus apply Rosenthal's inequality (see Lemma~\ref{lem:Rosenthal}) to obtain 
\begin{align*}
\|\EE[\Lambda_s D_s \mid W]\|_p & \leq 
C \sqrt{p} \left(\sum_{i=1}^n  \left\|\Lambda_\beta D_{i, \beta} \right\|_2^2\right)^{1/2} 
+C  p \left(\sum_{i=1}^n  \left\|\Lambda_\beta D_{i, \beta} \right\|_p^p\right)^{1/p} \\
& \leq C \left(\sqrt{p \beta_2}  + p \beta_p^{1/p} \right).
\end{align*}
Similarly,
\[
\|\EE[\Gamma_s \mid W] - \EE[\Gamma_s] \|_p \leq 
 C \sqrt{p} \left(\sum_{i=1}^n  \left\| \Lambda_\beta D_{i,\beta}^{\otimes 2} \right\|_2^2\right)^{1/2} \\
+ C p \left(\sum_{i=1}^n  \left\| \Lambda_\beta D_{i,\beta}^{\otimes 2} \right\|_p^p\right)^{1/p}
\]
and, since $\|D_{i, \beta}\| \leq \beta \leq \sqrt{\Delta(t) / (p-1)}$,  we can use Cauchy-Schwarz inequality to obtain 
\[
\left\| \Lambda_\beta  D_{i,\beta}^{\otimes 2} \right\|  \leq  \left\| \Lambda_\beta D_{i,\beta}\right\|  \left\|D_{i,\beta}\right\| 
 \leq \sqrt{\frac{\Delta(t)}{p-1}} \left\| \Lambda_\beta D_{i,\beta} \right\|.
\]
Therefore,
\begin{align*}
 \sqrt{\frac{p - 1}{\Delta(t)}} \|\EE[\Gamma_s \mid W] - \EE[\Gamma_s] \|_p & \leq  
 C \sqrt{p} \left(\sum_{i=1}^n  \left\| \Lambda_\beta D_{i,\beta}\right\|_2^2\right)^{1/2}
+ Cp  \left(\sum_{i=1}^n  \left\| \Lambda_\beta D_{i,\beta}\right\|_p^p\right)^{1/p} \\
& \leq  C \left(\sqrt{p \beta_2} + p \beta_p^{1/p} \right)
\end{align*}
which concludes the proof. 

\subsection{Large times}

Finally, we are left with bounding $\|\rho_t\|_p$ for "large" values of $t$ by using large-scale features of the $(W_i)_{1 \leq i \leq n}$. In practice, we improve on Proposition 6.1 \cite{ModerateDeviations}. However, while this result was derived in the general exchangeable pairs framework, our improvements require dealing with sums of independent random variables and thus to the Central Limit Theorem case. 

\begin{proposition}
\label{pro:Psi3}
Suppose $W = \sum_{i=1}^n W_i$ where the $(W_i)_{1 \leq i \leq n}$ are centered independent random variables on $\mathbb{R}^d$ with finite moment of order $p+2$ such that $\EE[W^{\otimes 2}] = I_d$. There exists $C > 0$ such that for any $p < q \leq p+2$ and $r$ verifying $\frac{1}{q} + \frac{1}{r} = \frac{1}{p}$ and any $t$ such that $\Delta(t) > (p-1) \max_{i \in \{1,\dots,n\}} \|\EE[W_i^{\otimes 2}]\|$, we have 
\begin{multline*}
\|\rho_t\|_p \leq \Psi_3(t) \coloneqq \frac{e^{-3t} \|\EE[W^{\otimes 3}] H_2(Z)\|_p }{2} 
+  \frac{C \|\EE[W^{\otimes 3}]\| W_{q}(\nu,\gamma)}{\eta_{2r}(t)^{3/2}}  \\
+ C \left(\sqrt{\frac{ p N_4 (t)}{ \eta_p(t) }} 
+ p \left(\frac{ N_{p+2}(t)}{\eta_p(t)}\right)^{1/p} \right) 
+ \frac{N'_4(t)}{\eta_p(t)},
\end{multline*} 
where
\begin{itemize}
\item $\eta_p(t) = \frac{\Delta(t)}{p-1}$;
\item $\xi_i(t) = \log\left(\frac{\eta_p(t)}{\|\EE[W_i^{\otimes 2}]\|} \right)$;
\item $\forall q > 2, N_q(t) = \sum_{i=1}^n \frac{\EE[\|D_{i}\|^{q} (1_{\|D_i\|^2 \geq \eta_p(t) \xi_i(t)} + \xi_i(t)^{-1})^{q/2-1}]}{\xi_i(t)}$;
\item $N'_4(t) = \sum_{i=1}^n \frac{\|\EE[D_i^{\otimes 2} \|W_i\|\|D_i\|]\|}{\sqrt{\|\EE[W_i^{\otimes 2}]\|} \xi_i(t)^{3/2}}$.
\end{itemize}
\end{proposition}

For any $i \in \{1,\dots,n\}$ and any $t > 0$, let $D_{i,t} = D_i 1_{\|D_i\|^2 \leq \eta_p(t) \xi_i(t)  }$.
Let us first rewrite $\rho_t$ with the help of the following result.

\begin{lemma} 
\label{lem:taut}
For any $i \in \{1,\dots,n\}$, the quantity
\[
\tau_{i,t} = \EE\left[D_{i,t}  \mid F_t\right] 
+ \sum_{k=1}^\infty \frac{\EE[(W'_i \otimes D_{i,t}^{\otimes k}) H_k(Z) \mid F_t]}{k! \Delta(t)^{k/2}} 
\]
verifies
\[
\EE[\tau_{i,t} \mid F_t] = 0.
\]
\end{lemma}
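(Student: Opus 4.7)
The plan is to dualise against test functions: since $\tau_{i,t}$ is by construction $\sigma(F_t)$-measurable, we have $\EE[\tau_{i,t}\mid F_t]=\tau_{i,t}$, so it suffices to verify $\EE[\phi(F_t)\,\tau_{i,t}]=0$ for $\phi$ in a dense class, for instance the complex exponentials $\phi(x)=e^{i\xi\cdot x}$, for which the formal Taylor series appearing below converges absolutely and the sum--expectation interchange is automatic.

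The first step is to trade Hermite polynomials for derivatives of $\phi$ via Gaussian integration by parts. Writing $F_t=e^{-t}W+\sigma_t Z$ with $\sigma_t=e^{-t}\sqrt{\Delta(t)}$ and using that $Z$ is independent of $(W,W_i,W'_i)$, one has (up to the sign built into the paper's convention for $H_k$)
\[
\EE[\phi(F_t)\,H_k(Z)\mid W,W_i,W'_i]=\sigma_t^k\,\EE[\nabla^k\phi(F_t)\mid W,W_i,W'_i].
\]
Since $\sigma_t^k/\Delta(t)^{k/2}=e^{-kt}$, integrating the $k$-th summand of $\tau_{i,t}$ against $\phi(F_t)$ yields $\frac{e^{-kt}}{k!}\,\EE[(W'_i\otimes D_{i,t}^{\otimes k})\cdot\nabla^k\phi(F_t)]$, which is precisely the $k$-th Taylor coefficient of $W'_i\,\phi$ around $F_t$. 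Summing $k\ge 1$ therefore collapses the series to $\EE[W'_i\,(\phi(F_t+e^{-t}D_{i,t})-\phi(F_t))]$.

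The second step exploits the truncation: on $\{\|D_i\|^2\le\eta_p(t)\}$ one has $D_{i,t}=D_i$, so $F_t+e^{-t}D_{i,t}=F_t^{(i)}\coloneqq F_t+e^{-t}D_i$, whereas on the complement $D_{i,t}=0$ and the shift is trivial. Hence the increment factorises as $(\phi(F_t^{(i)})-\phi(F_t))\,1_{\|D_i\|^2\le\eta_p(t)}$. Invoking exchangeability of $(W_i,W'_i)$---the swap preserves the joint law, sends $(W'_i,F_t^{(i)})\mapsto(W_i,F_t)$, and leaves the symmetric truncation indicator invariant---gives
\[
\EE[W'_i\,\phi(F_t^{(i)})\,1_{\|D_i\|^2\le\eta_p(t)}]=\EE[W_i\,\phi(F_t)\,1_{\|D_i\|^2\le\eta_p(t)}].
\]
The sum then rewrites as $\EE[(W_i-W'_i)\,\phi(F_t)\,1_{\|D_i\|^2\le\eta_p(t)}]=-\EE[\phi(F_t)\,D_{i,t}]$, exactly cancelling the leading contribution $\EE[\phi(F_t)\,\EE[D_{i,t}\mid F_t]]=\EE[\phi(F_t)\,D_{i,t}]$.

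The step that requires the most care is the telescoping of the infinite series: for arbitrary smooth $\phi$ the convergence of $\sum_{k\ge 1}\frac{e^{-kt}}{k!}\nabla^k\phi(F_t)\cdot D_{i,t}^{\otimes k}$ to $\phi(F_t+e^{-t}D_{i,t})-\phi(F_t)$ is not automatic. Taking $\phi(x)=e^{i\xi\cdot x}$ dominates the $k$-th term by $\|W'_i\|\,(e^{-t}|\xi|\,\|D_{i,t}\|)^k/k!$, whose sum is integrable under the assumed second moment, and Fourier inversion then extends the identity to all test functions of $F_t$. One also has to track the sign conventions in $H_k(x)=e^{\|x\|^2/2}\nabla^k e^{-\|x\|^2/2}$ carefully, since it is precisely the alignment of the Hermite--integration-by-parts sign with the Taylor shift that produces $F_t+e^{-t}D_{i,t}$, which is what makes the exchangeability cancellation land correctly.
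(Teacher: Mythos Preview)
Your proposal is correct and follows essentially the same route as the paper: test against $\phi(F_t)$, trade $H_k(Z)$ for $\nabla^k\phi$ via Gaussian integration by parts, collapse the resulting Taylor series to a shift by $e^{-t}D_{i,t}$, and invoke the $(W_i,W'_i)$ swap to produce the cancellation with the $D_{i,t}$ term. The only cosmetic differences are the order of the steps (the paper first Taylor expands $\phi(F_t)$ around $F_t+e^{-t}D_i$, then integrates by parts, then swaps; you integrate by parts term-by-term, then sum, then swap) and the device used to justify convergence of the series (you take $\phi=e^{i\xi\cdot x}$ and argue by Fourier density, whereas the paper appeals to the real analyticity of $x\mapsto\EE[\phi(e^{-t}x+\sqrt{1-e^{-2t}}Z)]$).
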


\begin{proof}
Let  $i \in \{1,\dots,n\}$ and let $\test$ be a smooth test function. 
Since $\Phi :x \rightarrow \EE[\phi(e^{-t} x + \sqrt{1-e^{-2t}}Z)]$ is real analytic (see e.g. Lemma~1 \cite{Bonis} or Lemma~6.4 \cite{ModerateDeviations}), we have 
\begin{align*}
\EE[ W'_i \test(F_t + e^{-t} D_{i,t})  1_{\|D_i\|^2 \leq \eta_p(t) \xi_i(t) }] 
 & = \sum_{k=0}^\infty \frac{e^{-kt}}{k!} \EE\left[ W'_i  \left<D_{i,t}^{\otimes k}, \nabla^{k} \test(F_t)\right> \right] \\
 & = \sum_{k=0}^\infty \frac{e^{-kt}}{k!} \EE\left[ (W'_i  \otimes D_{i,t}^{\otimes k}) \nabla^{k} \test(F_t) \right].
\end{align*}
Thus, by performing multiple integrations by parts with respect to the Gaussian measure (see e.g. Equation (16) \cite{Bonis}), we obtain 
\[
 \EE[ W'_i  \test(F_t + e^{-t} D_{i,t}) 1_{\|D_i\|^2 \leq \eta_p(t) \xi_i(t) }] 
 = \sum_{k=0}^\infty \frac{\EE\left[ (W'_i  \otimes D_{i,t}^{\otimes k}) H_k(Z) \test(F_t) \right]}{k! \Delta(t)^{k/2}}.
\]
Finally, since $W_i$ and $W'_i$ are independent and identically distributed, we have 
\[
\EE[ W'_i \test(F_t + e^{-t} D_{i,t})  1_{\|D_i\|^2 \leq \eta_p(t) \xi_i(t) }] = \EE[ W_i \test(F_t)  1_{\|D_i\|^2 \leq \eta_p(t) \xi_i(t) }]
\]
concluding the proof. 
\end{proof}

We are now ready to start the proof of Proposition~\ref{pro:Psi3}. Using Lemma~\ref{lem:taut}, we obtain 
\[
\rho_t = \rho_t + e^{-t} \sum_{i=1}^n \EE[\tau_{i,t} \mid F_t].
\]
Then, since $\sum_{i=1}^n \EE[W_i^{\otimes 2}] =  I_d$ and $\sum_{i=1}^n \EE[W_i^{\otimes 3}] = \EE[W^{\otimes 3}]$, we can write 
\[\rho_{t} = \frac{e^{-t} \EE[W^{\otimes 3}] \EE[H_2(Z) \mid F_t]}{2 \Delta(t)} + e^{-t} \sum_{i=1}^n \EE\left[W_i - \frac{\EE[W_i^{\otimes 2}]}{\sqrt{\Delta(t)}} Z - \frac{\EE[W_i^{\otimes 3}]}{2 \Delta(t)} H_2(Z)
+ \tau_{i,t} \mid F_t \right].
\]
Thus, combining the triangle inequality, Jensen's inequality and  Lemma~\ref{lem:HermiteNorm}, we obtain
\[
 \|\rho_t\|_p \leq \frac{e^{-t}| \|\EE[\EE[W^{\otimes 3}] H_2(Z) \mid F_t]\|_p}{2 \Delta(t)} + e^{-t} A(t),
\]
where 
\[
A(t) = \EE\left[ \left(\sum_{k=0}^\infty \left\|\sum_{i=1}^n A_{k,i}  \right\|^2 \right)^{p/2} \right]^{1/p} 
\]
with
\begin{itemize}
\item $A_{0,i} = \EE[D_{i,t} + W_i \mid W_i]$;
\item $\forall k \in \{1,2\}, A_{k,i} = \frac{ \EE[W'_i \otimes D_{i,t}^{\otimes k} \mid W_i] - \EE[W_i^{\otimes (k+1)}]}{\sqrt{k!} \eta_p(t)^{k/2}}$;
\item $\forall k > 2, A_{k,i} =  \frac{  \EE[W'_i \otimes D_{i,t}^{\otimes k} \mid W_i]}{\sqrt{k!} \eta_p(t)^{k/2}}$.
\end{itemize}
First, by Lemmas~\ref{lem:condExpec} and \ref{lem:condExp2}, we have 
\[
\frac{e^{-t}| \|\EE[\EE[W^{\otimes 3}] H_2(Z) \mid F_t]\|_p}{2 \Delta(t)} \leq \frac{e^{-3t}}{2} \|\EE[W^{\otimes 3}] H_2(Z)\|_p 
+  \frac{C \|\EE[W^{\otimes 3}]\| W_{q}(\nu,\gamma)}{\eta_{2r}(t)^{3/2}}
\]
where $q > p$ and $r$ is such that 
\[
\frac{1}{q} + \frac{1}{r} = \frac{1}{p}.
\]
Let $\bar{D}_{i,t} = D_i 1_{\|D_i\|^2 \geq \eta_p(t) \xi_i(t) }$.
In order to deal with $A(t)$, let us first remark that, since $\EE[W_i] = 0$ and since $W'_i$ and $W_i$ are independent, we have 
\[
A_{0,i} = \EE[D_{i,t} + W_i \mid W_i] = \EE[D_{i,t} - D_i \mid W_i] = -\EE[\bar{D}_{i,t} \mid W_i].
\]
And similarly,
\[
A_{1,i} = -\frac{\EE[W'_i \otimes \bar{D}_{i,t} \mid W_i]}{\sqrt{\eta_p(t)}}.
\]
Let us also note that 
\[
\EE[A_{2,i}] = - \frac{\EE[W'_i \otimes \bar{D}_{i,t}^{\otimes 2}]}{ 2 \eta_p(t)}.
\]
Then, viewing $A(t)$ as the $p$-norm of an infinite-dimensional vector, we can apply Rosenthal's inequality (see Lemma~\ref{lem:Rosenthal}) to obtain
\begin{multline*}
 \|\rho_t\|_p \leq  \sum_{i=1}^n \left( \sum_{k=0}^\infty \|\EE[A_{k,i}]\|^2 \right)^{1/2} + C\sqrt{p} \left(\sum_{i=1}^n \EE\left[\sum_{k=0}^\infty \|B_{k,i}\|^2 \right] \right)^{1/2} \\ + Cp \left(\sum_{i=1}^n \EE\left[\left(\sum_{k=0}^\infty \|B_{k,i}\|^2 \right)^{p/2} \right] \right)^{1/p},
\end{multline*}
where
\[
\forall k \in \mathbb{N}, 1 \leq i \leq n, B_{k,i} = \begin{cases}
A_{k,i} \text{ if $k \neq 2$} \\
 \frac{ \EE[W'_i \otimes D_{i,t}^{\otimes 2} \mid W_i]}{\sqrt{2} \eta_p(t)} \text{ if $k=2$}.
\end{cases}
\]
Let us conclude the proof by bounding these quantities. 

\subsubsection{Bounding the first term}
Let $i \in \{1, \dots, n\}$. 
First, let us note that since $W_i$ and $W'_i$ are independent,
\[
\EE[A_{0,i}] = \EE[\bar{D}_{i,t}] = 0.
\]
On the other hand, since $\EE[W_i] = 0$ and $\EE[W_i^{\otimes 2}] = I_d$, we have $\EE[W'_i \otimes \bar{D}_{i,t}] = - \EE[W_i \otimes \bar{D}_{i,t}]$. Hence, 
\[
 \EE[W'_i \otimes \bar{D}_{i,t}] = \frac{\EE[\bar{D}_{i,t}^{\otimes 2}]}{2}
\]
and 
\[
\|\EE[A_{1,i}]\|^2 = \frac{\|\EE[\bar{D}_{i,t}^{\otimes 2}]\|^2}{4 \eta_p(t)} \leq \frac{\|\EE[D_{i}^{\otimes 2} \|D_{i}\|^2]\|^2}{4 \eta_p(t)^{3} \xi_i(t)^2 } \leq \frac{\|\EE[D_{i}^{\otimes 2} \|D_{i}\| \|W_i\|]\|^2}{(\eta_p(t) \xi_i(t))^{3}} \xi_i(t).
\]
On the other hand, 
\[
\|\EE[A_{2,i}]\|^2 \leq \frac{\|\EE[\bar{D}_{i,t}^{\otimes 2} \|W_i\|]\|^2}{\eta_p(t)^2} 
 \leq \frac{\|\EE[D_{i}^{\otimes 2} \|D_{i}\| \|W_i\|]\|^2}{(\eta_p(t) \xi_i(t))^{3}} \frac{\xi_i(t)^2}{2}.
\]
Finally, for any $k \geq 3$,
\[
\|\EE[A_{k,i}]\|^2 \leq \frac{\|\EE[D_{i,t}^{\otimes 2} \|D_{i,t}\|^{k-2} \|W_i\|]\|^2}{k! \eta_p(t)^k}  \leq \frac{\|\EE[D_{i}^{\otimes 2} \|D_{i}\| \|W_i\|]\|^2}{(\eta_p(t) \xi_i(t))^{3}} \frac{\xi_i(t)^k}{k!}.
\]
Therefore, by definition of $\xi_i(t)$,
\begin{align*}
\sum_{k=0}^\infty \|\EE[A_{k,i}]\|^2
& \leq \frac{\|\EE[D_i^{\otimes 2} \|D_i\| \|W_i\|]\|^2}{(\eta_p(t) \xi_i(t))^3} \sum_{k=1}^\infty \frac{\xi_i(t)^{k}}{k!} \\
& \leq \frac{\|\EE[D_i^{\otimes 2} \|D_i\| \|W_i\|]\|^2}{(\eta_p(t) \xi_i(t))^3} e^{\xi_i(t)} \\
& \leq \frac{\|\EE[D_i^{\otimes 2} \|D_i\| \|W_i\|]\|^2}{\|\EE[W_i^{\otimes 2}]\|\eta_p(t)^2 \xi_i(t)^3}
\end{align*}
and thus
\[
 \sum_{i=1}^n \left( \sum_{k=0}^\infty \|\EE[A_{k,i}]\|^2 \right)^{1/2} \leq \frac{N'_4(t)}{\eta_p(t)}.
\]

\subsubsection{Bounding the last two terms}
Let $i \in \{1, \dots, n\}, q \in [2,p]$.
First, by Jensen' inequality and by definition of $\bar{D}_{i,t}$, 
\[ 
\|B_{0,i} \|^2 \leq \|\EE[\bar{D}_{i,t} \mid W_i]\|^2 \leq \EE[\|\bar{D}_{i,t}\|^2 \mid W_i] \leq \frac{\EE[\|\bar{D}_{i,t}\|^{2+4/q} \mid W_i]}{(\eta_p(t) \xi_i(t) )^{2/q}}.
\]
Let $W''_i$ and $\bar{D}'_{i,t}$ be a conditionally independent copies of $W'_i$ and $\bar{D}_{i,t}$ with respect to $W_i$. We have  
\begin{align*}
 \|\EE[W'_i \otimes \bar{D}_{i,t} \mid W_i]\|^2 & =\EE[\left< W'_i  \otimes \bar{D}_{i,t}, W''_i  \otimes \bar{D}'_{i,t} \right> \mid W_i] \\
 &  =\EE[\left< W'_i, W''_i \right> \left<\bar{D}_{i,t}, \bar{D}'_{i,t} \right> \mid W_i]
\end{align*}
and, by Cauchy-Schwarz's inequality,
\begin{align*}
 \|\EE[W'_i \otimes \bar{D}_{i,t} \mid W_i]\|^2 & \leq \EE[\left< W'_i, W''_i \right>^2 \mid W_i]^{1/2} \EE[\left<\bar{D}_{i,t},  \bar{D}'_{i,t} \right>^2 \mid W_i]^{1/2} \\
 &  \leq \EE[\left< W_i^{\prime \otimes 2}, W_i^{\prime \prime \otimes 2} \right> \mid W_i]^{1/2} \EE[\left<\bar{D}_{i,t}^{\otimes 2}, \bar{D}_{i,t}^{\prime \otimes 2} \right> \mid W_i]^{1/2} \\
 & \leq \|\EE[W_i^{\prime \otimes 2} \mid W_i]\| \|\EE[\bar{D}_{i,t}^{\otimes 2} \mid W_i]\|.
\end{align*}
Since $W'_i$ is independent of $W$, $\|\EE[W_i^{\prime \otimes 2} \mid W_i]\| = \|\EE[W_i^{\prime \otimes 2}]\| = \|\EE[W_i^{\otimes 2}]\|$. Thus, 
\begin{align*}
\|B_{1,i}\|^2 & = \frac{\|\EE[W'_i \otimes \bar{D}_{i,t} \mid W_i]\|^2}{\eta_p(t)} \\
& \leq \frac{\|\EE[W_i^{\otimes 2}]\| \EE[\|\bar{D}_{i,t}\|^2 \mid W_i]}{\eta_p(t)}\\
& \leq \frac{\|\EE[W_i^{\otimes 2}]\| \EE[\|D_{i}\|^{2+4 / q}\mid W_i] }{(\eta_p(t) \xi_i(t) )^{1 + 2/q}} \xi_i(t).
 \end{align*}
 On the other hand, 
 \begin{align*}
\|B_{2,i}\|^2 & \leq  \frac{\|\EE[W'_i \otimes D_{i,t}^{\otimes 2} \mid W_i]\|^2}{2\eta_p(t)^2} \\
& \leq \frac{\|\EE[W_i^{\otimes 2}]\| \EE[\|D_{i,t}\|^4 \mid W_i]  }{2\eta_p(t)^2}\\
& \leq \frac{\|\EE[W_i^{\otimes 2}]\| \EE[\|D_{i}\|^{2+4 / q}\mid W_i] }{(\eta_p(t) \xi_i(t) )^{1 + 2/q}} \frac{\xi^2_i(t)}{2}.
 \end{align*}
 Finally, for any $k \geq 3$,
 \[
 \|B_{k,i}\|^2 \leq \frac{\|\EE[W_i^{\otimes 2}]\|  \EE[\|D_{i,t}\|^{2k} \mid W_i]}{k! \eta_p(t)^k}  \leq \frac{\|\EE[W_i^{\otimes 2}]\| \EE[\|D_{i}\|^{2+4 / q}\mid W_i] }{(\eta_p(t) \xi_i(t) )^{1 + 2/q}} \frac{\xi_i(t)^k}{k!}.
 \]
Combining both these bounds yields, by definition of $\xi_i(t)$
\begin{align*}
\sum_{k=1}^\infty \|B_{k,i}\|^2
& \leq \frac{\|\EE[W_i^{\otimes 2}]\| \EE[\|D_{i}\|^{2 + 4 / q}\mid W_i]}{(\eta_p(t) \xi_i(t) )^{1 + 2 / q}} \sum_{k=1}^\infty \frac{\xi_i(t)}{k!} \\
& \leq \frac{\|\EE[W_i^{\otimes 2}]\| \EE[\|D_{i}\|^{2 + 4 / q}\mid W_i]}{(\eta_p(t) \xi_i(t) )^{1+2 / q}} e^{\xi_i(t)} \\
& \leq \frac{\EE[\|D_{i}\|^{2 +4 / q}\mid W_i] }{\xi_i(t)^{1+2 / q} \eta_p(t)^{2/q}}
\end{align*}
Thus,
\[
\sum_{k=0}^\infty \|B_{k,i}\|^2
\leq \frac{\EE[\|D_{i}\|^{2 + 4/q} (1_{\|D_i\|^2 \geq \eta_p(t) \xi_i(t)} + \xi_i(t)^{-1}) \mid W_i]}{(\eta_p(t) \xi_i(t) )^{2/q}}
\]
and, by Jensen's inequality, 
\[
\EE\left[\left(\sum_{k=0}^\infty \|B_{k,i}\|^2 \right)^{q/2} \right]  
 \leq \frac{\EE[\|D_{i}\|^{q+2} (1_{\|D_i\|^2 \geq \eta_p(t) \xi_i(t)} + \xi_i(t)^{-1})^{q/2}]}{\eta_p(t) \xi_i(t) }.
\]
Therefore,
\[
\left(\sum_{i=1}^n \EE\left[\left(\sum_{k=0}^\infty \|B_{k,i}\|^2 \right)^{q/2} \right] \right)^{1/q} \leq \left( \frac{N_{q+2}(t)}{\eta_p(t)} \right)^{1/q}.
\]

\subsection{Combining times}

We are now ready to conclude the proof of Theorem~\ref{thm:mainCLT}. Let $\epsilon_1$ and $\epsilon_2$ such that $\eta_p(\epsilon_1) \coloneqq \frac{\Delta(\epsilon_1)}{p-1} = \gap^2$ and $\epsilon_2$ be such that $\eta_p(\epsilon_2) \coloneqq \frac{\Delta(\epsilon_2)}{p-1} = \epsilon \coloneqq \max_{i \in \{1, \dots, n\}} \|\EE[W_i^{\otimes 2}]\|^{2/3}$. Remark that, by assumption, $\epsilon_1 < \epsilon_2$. In the following computations, we will rely on the fact that $\Delta(t) \geq 2t$. By (\ref{eq:small-time}) and Propositions~\ref{pro:Psi2} and \ref{pro:Psi3} we have 
\[
\int_0^\infty \|\rho_t\|_p \, dt \leq \int_0^{\epsilon_1} \Psi_1(t) \, dt  
+ \int_{\epsilon_1}^{\epsilon_2} \Psi_2(t) \, dt 
+ \int_{\epsilon_2}^\infty \Psi_3(t) \, dt. 
 \]
First, since $\beta^2 < \epsilon$
\begin{align*}
\int_0^{\epsilon_1} \Psi_1(t) \, dt & \leq C \left(p \sqrt{\eta_p(\epsilon_1) d } +  p \eta_p(\epsilon_1) \left( \sqrt{p d} + p L_p^{1/p} \right) \right) \\
& \leq C \left(p \gap \sqrt{d } +  p \epsilon \left( \sqrt{pd} + p L_p^{1/p} \right) \right).
\end{align*}
Then,
\begin{align*}
\int_{\epsilon_1}^{\epsilon_2} \Psi_2(t) \, dt & \leq C p \eta_p(\epsilon_2) \left( \sqrt{p}( \sqrt{ \beta_2} + \sqrt{d}) + p  \left(\beta_p^{1/p} + L_p^{1/p}\right) \right) + \frac{Cp\beta^2\sqrt{d}}{\sqrt{\eta_p(\epsilon_1)}} \\
& \leq  C p \epsilon \left( \sqrt{p}( \sqrt{ \beta_2} + \sqrt{d}) + p  \left(\beta_p^{1/p} + L_p^{1/p}\right) \right) + C p \gap \sqrt{d}.
\end{align*}
Finally, let $t \geq \epsilon_2$. Since $\eta_p(t) \geq \max_{i \in \{1, \dots, n\}} \|\EE[W_i^{\otimes 2}]\|^{2/3}$, we have, for any $i \in \{1, \dots, n\}, \xi_i(t) \geq C \xi_i$. On the other hand, since $p \leq \min_{i \in \{1, \dots, n\}} \|\EE[W_i^{\otimes 2}]\|^{-1}$, we have $-\log(\epsilon_2) \leq -\log(\eta_p(\epsilon_2)) - \log(p-1) \leq C \xi_i$. Hence, since $N_4(.), N_{p+2}(.)$ and $N'_4(.)$ are decreasing functions, 
\begin{align*}
\int_{\epsilon_2}^{\infty} \Psi_3(t) \, dt &   \leq \frac{\|\EE[W^{\otimes 3}] H_2(Z)\|_p }{6} + \frac{ C(2r-1)^{3/2} \|\EE[W^{\otimes 3}]\| W_{q}(\nu,\gamma)}{\sqrt{(p-1) \eta_p(\epsilon_2)}}
\\
& \hspace{3cm} +   C p \left(  
  \sqrt{N_4(\epsilon_2)} 
+ (p N_{p+2}(\epsilon_2))^{1/p}
-  N'_4(\epsilon_2)\log(\epsilon_2)  \right) \\
& \leq \frac{\|\EE[W^{\otimes 3}] H_2(Z)\|_p }{6} + \frac{C (2r-1)^{3/2} \|\EE[W^{\otimes 3}]\| W_{q}(\nu,\gamma)}{\sqrt{(p-1)\epsilon}} \\
&  \hspace{3cm} +  C p \left(   
  \sqrt{N_4} 
+ (p N_{p+2})^{1/p}
+  N'_4 \right).
\end{align*}
which concludes the proof of Theorem~\ref{thm:mainCLT}.

\section{Technical lemmas}

\begin{lemma}[Rosenthal inequality, Theorem 5.2 \cite{Rosenthal}]
\label{lem:Rosenthal}
There exists $C > 0$ such that, for any $p \geq 2$ and any independent random variables $(U_i)_{1 \leq i \leq n}$  with finite moment of order $p$ taking values in a Hilbert space $\mathcal{H}$, we have  
\[
\left\|\sum_{i=1}^n U_i - \EE[U_i] \right\|_{\mathcal{H},p} \leq C\sqrt{p}\left(\sum_{i=1}^n \|U_i\|_{\mathcal{H},2}^2 \right)^{1/2} + C p \left(\sum_{i=1}^n \|U_i\|_{\mathcal{H},p}^p \right)^{1/p},
\]
where, for any random variable $X$ taking values in $\mathcal{H}$ and any $q > 0$,
\[
\|X\|_{\mathcal{H},q} = \EE[\|X\|^q_{\mathcal{H}}].
\]
\end{lemma}

\begin{proof}
By Theorem 5.2 \cite{Rosenthal},
\[
\left\|\sum_{i=1}^n U_i - \EE[U_i] \right\|_{\mathcal{H},p} \leq C\sqrt{p}\left(\sum_{i=1}^n \|U_i - \EE[U_i]\|_{\mathcal{H},2}^2 \right)^{1/2} + C p \left(\sum_{i=1}^n \|U_i - \EE[U_i]\|_{\mathcal{H},p}^p \right)^{1/p}.
\]
Now for $q \in [2,p]$, combining the triangle and Jensen's inequalities yields 
\[
\|U_i - \EE[U_i]\|_{\mathcal{H},q} \leq \|U_i\|_{\mathcal{H},q} + \|\EE[U_i]\|_{\mathcal{H}} \leq 2 \|U_i\|_{\mathcal{H},q},
\]
concluding the proof.

\end{proof}




\begin{lemma}[Lemma 3 \cite{Bonis}]
\label{lem:HermiteNorm}
 Let $Z$ be a $d$-dimensional standard normal random variable. For any $p \geq 2$, $ k \in \mathbb{N}$ and $M \in (\RR^d)^{\otimes k+1}$, we have   
 \[
 \|M H_k(Z)\|_p^2 \leq   (p-1)^{k} k! \|M\|^2. 
 \]
 \end{lemma}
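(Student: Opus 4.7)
The plan is to combine Gaussian hypercontractivity with the orthogonality of multivariate Hermite polynomials, treating the vector-valued quantity $M H_k(Z) \in \RR^d$ coordinate by coordinate.

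First I would reduce the $L^p$ bound to an $L^2$ bound using hypercontractivity. For each $i \in \{1,\dots,d\}$, the scalar random variable $(M H_k(Z))_i = \sum_{j} M_{i,j} H_k(Z)_j$ is a linear combination of multivariate Hermite polynomials of order $k$, and therefore lies in the $k$-th Wiener chaos of $Z$. The Gaussian hypercontractivity inequality then yields $\|(M H_k(Z))_i\|_p \leq (p-1)^{k/2}\|(M H_k(Z))_i\|_2$ for every $p \geq 2$. Since $p/2 \geq 1$, Minkowski's inequality applied in $L^{p/2}$ to the non-negative random variables $(M H_k(Z))_i^2$ gives
\[
\|M H_k(Z)\|_p^2 \leq \sum_{i=1}^d \|(M H_k(Z))_i\|_p^2 \leq (p-1)^k \sum_{i=1}^d \|(M H_k(Z))_i\|_2^2 = (p-1)^k \, \EE\bigl[\|M H_k(Z)\|^2\bigr].
\]

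Next I would compute the remaining $L^2$ quantity explicitly via the orthogonality of the Hermite tensor basis. Because mixed partial derivatives commute, each entry $H_k(Z)_j$ is invariant under any permutation of the multi-index $j$; hence $M H_k(Z) = \tilde M H_k(Z)$, where $\tilde M_{i,j} := \frac{1}{k!} \sum_{\sigma \in S_k} M_{i,\sigma(j)}$ is the symmetrization of $M$ over its last $k$ indices, and $\|\tilde M\|^2 \leq \|M\|^2$ by Jensen's inequality. Writing each $H_k(Z)_j$ as a product of univariate Hermite polynomials and using the one-dimensional orthogonality relation $\EE[h_n(Z_1) h_m(Z_1)] = n!\,\delta_{nm}$, I would obtain the tensor-level identity
\[
\EE\bigl[H_k(Z)_j H_k(Z)_{j'}\bigr] = \sum_{\sigma \in S_k} \prod_{l=1}^k \delta_{j_l, j'_{\sigma(l)}}.
\]
Plugging this into $\EE[\|M H_k(Z)\|^2] = \sum_{i,j,j'} \tilde M_{i,j} \tilde M_{i,j'} \EE[H_k(Z)_j H_k(Z)_{j'}]$ and using the symmetry of $\tilde M$ to absorb each $\sigma$ by relabeling the summation index shows that every permutation contributes $\|\tilde M\|^2$, giving $\EE[\|M H_k(Z)\|^2] = k!\,\|\tilde M\|^2 \leq k!\,\|M\|^2$. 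Combined with the first step, this produces $\|M H_k(Z)\|_p^2 \leq (p-1)^k k!\,\|M\|^2$, as required.

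The main obstacle is the tensor-index bookkeeping in the orthogonality computation, in particular verifying that the symmetrization trick is legitimate and that each $\sigma \in S_k$ indeed contributes exactly $\|\tilde M\|^2$ after relabeling. The hypercontractivity step is entirely standard once one recognizes that each component $(M H_k(Z))_i$ is an element of the $k$-th Wiener chaos.
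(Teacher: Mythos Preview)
Your argument is correct: the reduction from $L^p$ to $L^2$ via hypercontractivity on the $k$-th Wiener chaos, followed by the exact computation $\EE[\|MH_k(Z)\|^2]=k!\,\|\tilde M\|^2\le k!\,\|M\|^2$ using the symmetry of $H_k(Z)$ and the identity $\EE[H_k(Z)_jH_k(Z)_{j'}]=\sum_{\sigma\in S_k}\prod_l\delta_{j_l,j'_{\sigma(l)}}$, is clean and complete. The only point worth double-checking is the inequality $\|\tilde M\|\le\|M\|$; this holds because symmetrization over the last $k$ indices is the orthogonal projection of $(\RR^d)^{\otimes(k+1)}$ onto its symmetric subspace, hence a contraction in the Hilbert--Schmidt norm.

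As for the comparison: the present paper does not actually prove this lemma; it is quoted verbatim from \cite{Bonis} (Lemma~3 there) and stated without argument. Your route---hypercontractivity plus Hermite orthogonality---is precisely the standard one and is, in substance, the proof given in that reference, so there is nothing to contrast.
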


\begin{lemma} 
\label{lem:condExpec}
Let $X,Y$ and $Z$ be three random variables on $\mathbb{R}^d$ such that $Z$ is drawn from the Gaussian measure $\gamma$ and is independent from $(X,Y)$. Let $q > p \geq 2$ and suppose that $X$ and $Y$ have finite moment of order $q$. Then, for any $k \geq 0$ and any $i \in \{1,\dots,d\}^k$,
\[
\|\EE[H_i(Z) \mid X + Z] - \EE[H_i(Z) \mid Y + Z]\|_p  \leq C \sqrt{(2r-1)^{k+1} (k+1)!} \|Y - X\|_q,
\]
where $C > 0$ is a generic constant, $H_i = (H_k)_i$ and $r$ is such that $\frac{1}{r} + \frac{1}{q} = \frac{1}{p}$. 
\end{lemma}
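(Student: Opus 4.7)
The plan is to combine $L^p$-duality with a coupling-based interpolation of $X$ and $Y$ and Gaussian integration by parts. By the $L^p$-$L^{p'}$ pairing ($p':=p/(p-1)$), the quantity to bound equals
\[
\sup_{\|G\|_{p'}\le 1}\EE\bigl[G\,(\EE[H_i(Z)\mid X+Z]-\EE[H_i(Z)\mid Y+Z])\bigr],
\]
and the tower property rewrites this as $\EE[H_i(Z)(\tilde\phi_X(X+Z)-\tilde\phi_Y(Y+Z))]$, with $\tilde\phi_X(u):=\EE[G\mid X+Z=u]$ and $\tilde\phi_Y(u):=\EE[G\mid Y+Z=u]$. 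Hölder's inequality with $1/r+1/r'=1$, together with the bound $\|H_i(Z)\|_r\le\sqrt{(r-1)^k k!}\le\sqrt{r^k k!}$ from Lemma~\ref{lem:HermiteNorm}, reduces matters to the stability estimate
\[
\bigl\|\tilde\phi_X(X+Z)-\tilde\phi_Y(Y+Z)\bigr\|_{r'}\;\le\; C\,\|Y-X\|_q\,\|G\|_{p'},
\]
where $1/r'=1/p'+1/q$, equivalent to the stated $1/r+1/q=1/p$.

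For the stability estimate, the key observation is that $u\mapsto\tilde\phi_X(u)h_X(u)=\EE[G\,\gamma(u-X)]$ is $C^\infty$ in $u$ thanks to the Gaussian kernel $\gamma$ (and likewise for $Y$). I would couple $X$ and $Y$ on a common probability space and interpolate linearly via $X_t:=(1-t)X+tY$. Differentiating in $t$ yields
\[
\frac{d}{dt}\EE[G\,\gamma(u-X_t)]=\EE\bigl[G\,(u-X_t)\,\gamma(u-X_t)\cdot(Y-X)\bigr],
\]
so that $\tilde\phi_X(u)h_X(u)-\tilde\phi_Y(u)h_Y(u)$ admits an integral representation linear in $Y-X$; an analogous identity handles $h_X-h_Y$. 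Algebraically combining the two representations produces a formula for $\tilde\phi_X(X+Z)-\tilde\phi_Y(Y+Z)$ that avoids any pointwise division by $h_X$, and a final Hölder inequality with exponents $1/p'+1/q=1/r'$ isolates the factors $\|G\|_{p'}$ and $\|Y-X\|_q$.

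The main obstacle is precisely the avoidance of dividing by $h_X$: naive division would spoil the quantitative dependence on $\|G\|_{p'}$ in $L^{r'}$. The resolution is to recombine the interpolation representations of $\tilde\phi_X h_X-\tilde\phi_Y h_Y$ and of $h_X-h_Y$ (evaluated along the coupling at the random point $X+Z$ versus $Y+Z$), so that the Gaussian weights are absorbed into the smooth integrand before the last Hölder step is applied. The exponent bookkeeping of the three Hölder inequalities (one at the top, one splitting $1/r'=1/p'+1/q$, and the Hermite factor at level $r$) then forces exactly the relation $1/r+1/q=1/p$ stated in the lemma, and yields the constant $\sqrt{r^k k!}$.
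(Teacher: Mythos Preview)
Your duality reduction to the ``stability estimate''
\[
\bigl\|\tilde\phi_X(X+Z)-\tilde\phi_Y(Y+Z)\bigr\|_{r'}\le C\,\|Y-X\|_q\,\|G\|_{p'}
\]
is where the proposal has a genuine gap. You correctly identify that this is the main obstacle, but the ``resolution'' you describe---combining integral representations of $\tilde\phi_X h_X-\tilde\phi_Y h_Y$ and $h_X-h_Y$ so as to avoid division by $h_X$---is not carried out, and it is not clear it can be made to work as stated. After your Hölder step, the object $\tilde\phi_X(X+Z)=\EE[G\mid X+Z]$ involves an \emph{arbitrary} $G\in L^{p'}$, possibly depending on $Z$ as well as on $(X,Y)$; you have traded a conditional expectation of the very structured random variable $H_i(Z)$ for one of a generic $G$, and you now need a uniform (in $G$) quantitative stability bound for $\EE[G\mid X_t+Z]$ in $L^{r'}$. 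Producing such a bound essentially requires redoing a derivative-of-conditional-expectation computation anyway, and with $G$ generic there is no analogue of the identity $H_i(z)\gamma(z)=(\nabla^k\gamma(z))_i$ that makes the numerator explicit.

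The paper bypasses duality entirely and attacks the conditional expectation of $H_i(Z)$ directly. With $\epsilon=Y-X$ it interpolates $t\mapsto \EE[H_i(Z)\mid Y+Z+t\epsilon]$, writes this as a ratio $f(t)/g(t)$ of Gaussian integrals (using $H_i(z)\gamma(z)=(\nabla^k\gamma)_i(z)$ for the numerator), and applies the quotient rule. The point is that the resulting derivative is itself a combination of conditional expectations:
\[
\frac{d}{dt}\,\EE[H_i(Z)\mid Y+Z+t\epsilon]
=\EE\bigl[\langle \epsilon'-\epsilon,\nabla H_i(Z)\rangle \,\big|\, \cdot\,\bigr]
-\EE\bigl[\langle \epsilon'-\epsilon,Z\rangle\,\big|\,\cdot\,\bigr]\,\EE\bigl[H_i(Z)\,\big|\,\cdot\,\bigr],
\]
with $\epsilon'$ an independent copy of $\epsilon$. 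Now Jensen removes the conditioning and a single Hölder split (with exponents $q,r$ on the second product) gives the bound $\|\langle\epsilon'-\epsilon,\nabla H_i(Z)\rangle\|_p+\|\langle\epsilon'-\epsilon,Z\rangle\|_q\|H_i(Z)\|_r$, after which Lemma~\ref{lem:HermiteNorm} delivers $\sqrt{r^k k!}$. So the division by the density that worries you is absorbed automatically once the derivative is recognized as a conditional expectation; no duality, no stability estimate for a generic $G$, and no recombination of density representations is needed.
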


\begin{proof}
Let $\epsilon = Y- X$. 
We have
\[
\EE[H_i(Z) \mid X + Z] = \EE[H_i(Z) \mid Y + Z + \epsilon]
\]
and 
\[
\EE[H_i(Z) \mid Y + Z] - \EE[H_i(Z) \mid X + Z] = \int_0^1 \frac{d}{dt} \EE[H_i(Z) \mid Y + Z + t \epsilon] \, dt.
\]
Let us denote the density of $Z$ by $f_\gamma$ and the measure of $(Y, \epsilon)$ by $\mu$. For any $t \in [0,1]$, let 
\[
f(t) = \int (-1)^k \nabla_i f_\gamma\left(Z + Y-y' + t( \epsilon - \epsilon')\right)  d \mu(y',\epsilon'),
\]
where $\nabla_i \cdot = (\nabla^k \cdot)_i$. 
We then have 
\[
f'(t) = \int \left< \epsilon - \epsilon', (-1)^k \nabla \nabla_{i} f_\gamma\left(Z + Y-y' + t( \epsilon - \epsilon')\right)  \right> d \mu(y',\epsilon').
\]
Similarly, letting
\[
g(t) = \int f_\gamma\left(Z + Y-y' + t( \epsilon - \epsilon')\right) d \mu(y',\epsilon'),
\]
we have 
\[
g'(t) = \int \left< \epsilon - \epsilon', \nabla f_\gamma \left(Z + Y-y' + t( \epsilon - \epsilon')\right) \right> d \mu(y',\epsilon').
\]
By definition of the conditional expectation,
\begin{itemize}
\item $ \frac{f(t)}{g(t)} = \EE[H_i(Z) \mid Y + Z + t \epsilon]$; 
\item $\frac{g'(t)}{g(t)} = \EE[\left< \epsilon, Z\right > \mid Y + Z + t \epsilon] - \left< \epsilon, \EE[Z \mid Y + Z + t \epsilon] \right> $ and 
\item $\frac{f'(t)}{g(t)} = \EE[\left< \epsilon, H_{i+1}(Z)\right > \mid Y + Z + t \epsilon] - \left< \epsilon, \EE[H_{i+1}(Z) \mid Y + Z + t \epsilon] \right>$,
\end{itemize}
where  $H_{i+1}(x) = (-1)^{k+1}  \frac{\nabla \nabla_i f_\gamma(x)}{f_\gamma(x)}$.
Therefore, letting $G_t =  Y + Z + t \epsilon$, we obtain 
\begin{multline*}
\frac{d}{dt} \EE[H_i(Z) \mid G_t]  = \left(\frac{f}{g}\right)'  (t) 
 = \left<\epsilon,\EE[Z \mid G_t] \EE[H_i(Z) \mid G_t] - \EE[H_{i+1}(Z) \mid G_t] \right> \\ - \left(\EE[\left<\epsilon, Z\right> \mid G_t] \EE[H_i(Z) \mid G_t] - \EE[\left< \epsilon, H_{i+1}(Z) \right> \mid G_t]\right).
\end{multline*}
Applying the triangle inequality along with Cauchy-Schwarz, Holder's and Jensen's inequalities then yields
\[
\left\|\frac{d}{dt} \EE[H_i(Z) \mid G_t] \right\|_p \leq C \|\epsilon\|_q  (\|H_{i+1}(Z)\|_r + \|Z\|_{2r} \|H_i(Z)\|_{2r}), 
\]
where
\[
\frac{1}{q} + \frac{1}{r} = \frac{1}{p}. 
\]
Finally applying Lemma~\ref{lem:HermiteNorm} yields
\[
\left\|\frac{d}{dt} \EE[H_i(Z) \mid Y + Z + t \epsilon] \right\|_p \leq C \sqrt{ (2r-1)^{(k+1)}  (k+1)!} \|\epsilon\|_q,
\]
concluding the proof.
\end{proof}

\begin{lemma}
\label{lem:condExp2}
    Let $Y$ and $Z$ be two independent standard normal random variables on $\mathbb{R}^d$. Then, for any $k \geq 1$ and any $\alpha > 0$, we have 
    \[
    \EE[H_k(Z) \mid \alpha Y + \sqrt{1 - \alpha^2} Z] = (1 - \alpha^2)^{k/2} H_k(\alpha Y + \sqrt{1 - \alpha^2} Z).
    \]
\end{lemma}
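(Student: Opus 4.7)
The plan is to reduce this conditional expectation to an unconditional Gaussian integration via an orthogonal change of variables, and then identify the result using the generating function of the multivariate Hermite polynomials.

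First, I introduce the auxiliary random variable $G := -\sqrt{1-\alpha^2}\, Y + \alpha Z$. Writing $U := \alpha Y + \sqrt{1-\alpha^2}\, Z$, the pair $(U, G)$ is the image of $(Y, Z)$ under a rotation, so $(U, G)$ are independent standard Gaussians and one verifies $Z = \sqrt{1-\alpha^2}\, U + \alpha G$. The conditional expectation then reduces to
\[
\EE[H_k(Z) \mid U] \;=\; \EE_G\!\left[H_k\!\left(\sqrt{1-\alpha^2}\, U + \alpha G\right)\right],
\]
i.e.\ a genuine integral over $G$ with $U$ held fixed.

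Second, I invoke the generating function for the multivariate Hermite tensors,
\[
\sum_{k=0}^\infty \frac{(-1)^k \langle H_k(x), t^{\otimes k}\rangle}{k!} \;=\; e^{\langle t, x\rangle - \|t\|^2/2}, \qquad x, t \in \RR^d,
\]
which follows from the definition $H_k(x) = e^{\|x\|^2/2} \nabla^k e^{-\|x\|^2/2}$ by Taylor-expanding $e^{-\|x-t\|^2/2}$. Applying this with $x = \sqrt{1-\alpha^2}\, U + \alpha G$ and taking the expectation in $G$ gives, by the Gaussian Laplace transform,
\[
\EE_G\!\left[e^{\langle t,\, \sqrt{1-\alpha^2}\, U + \alpha G\rangle - \|t\|^2/2}\right] \;=\; e^{\langle \sqrt{1-\alpha^2}\, t,\, U\rangle - (1-\alpha^2)\|t\|^2/2},
\]
which is precisely the generating function evaluated at $\sqrt{1-\alpha^2}\, t$ in place of $t$, and therefore equals $\sum_k (-1)^k (1-\alpha^2)^{k/2} \langle H_k(U), t^{\otimes k}\rangle/k!$.

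Finally, matching the coefficients of $t^{\otimes k}$ in the two entire analytic expansions of the same function yields the desired identity. There is no real obstacle here: the only care required is bookkeeping with the tensor-valued Hermite polynomials, ensuring that the generating function identity and the coefficient matching are carried out consistently with the convention $H_k(x) = e^{\|x\|^2/2} \nabla^k e^{-\|x\|^2/2}$ used in the paper.
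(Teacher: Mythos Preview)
Your proof is correct. The rotation $(Y,Z)\mapsto(U,G)$ is orthogonal in $\RR^{2d}$, so $U$ and $G$ are independent standard Gaussians, the identity $Z=\sqrt{1-\alpha^2}\,U+\alpha G$ holds, and the generating-function computation goes through exactly as you wrote; matching coefficients of $t^{\otimes k}$ is legitimate because both sides are entire in $t$.

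The paper takes a different route. Rather than diagonalising the conditioning via a rotation, it works in the weak formulation: for any test function $\phi$, integration by parts in $Z$ gives $\EE[H_k(Z)\,\phi(U)] = (1-\alpha^2)^{k/2}\,\EE[\nabla^k\phi(U)]$, and since $U$ is itself standard Gaussian a second integration by parts yields $(1-\alpha^2)^{k/2}\,\EE[H_k(U)\,\phi(U)]$. This identifies the conditional expectation directly. The paper's argument is slightly shorter and avoids introducing the auxiliary variable $G$ or the generating function, at the cost of invoking the Gaussian integration-by-parts identity twice. Your approach is arguably more self-contained (it only needs the Gaussian Laplace transform and the generating function, both one-line computations) and makes the role of the rotation explicit, which can be conceptually helpful. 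Either method is perfectly acceptable for this lemma.
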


\begin{proof}
Let $\test$ be a smooth function with compact support. By performing multiple integrations by parts with respect to the Gaussian measure (see e.g. Equation (16) \cite{Bonis}), we obtain  
\begin{align*}
\EE\big[\EE[H_k(Z) \mid \alpha Y + \sqrt{1 - \alpha^2} Z] &\test(\alpha Y + \sqrt{1 - \alpha^2} Z)\big] \\
& = \EE[H_k(Z) \test(\alpha Y + \sqrt{1 - \alpha^2} Z)] \\
& = (1 - \alpha^2)^{k/2} \EE[\nabla^k \test(\alpha Y + \sqrt{1 - \alpha^2} Z)] \\
& = (1 - \alpha^2)^{k/2} \EE[H_k(\alpha Y + \sqrt{1 - \alpha^2} Z) \test(\alpha Y + \sqrt{1 - \alpha^2} Z)],
\end{align*}
concluding the proof.
\end{proof}

\begin{lemma}
    \label{lem:CS}
    Let $X$ be a random variable on $\mathbb{R}^d$ with identity covariance matrix. Then, 
    \[
    \|\EE[X^{\otimes 3}]\|^2 \leq  \sqrt{d}  \|\EE[X^{\otimes 2} \|X\|^2]\|.
    \]
\end{lemma}

\begin{proof}
Let $X'$ be an independent copy of $X$. By Cauchy-Schwarz's inequality, we have 
\begin{align*}
\|\EE[X^{\otimes 3}]\|^2 & = \EE\left[ \left<X^{\otimes 3}, X^{\prime \otimes 3} \right> \right] \\
& = \EE\left[ \left<X, X'\right>^3 \right] \\
& \leq \EE\left[ \left<X, X'\right>^2 \right]^{1/2} \EE\left[ \left<X, X'\right>^4 \right]^{1/2} \\
& \leq \EE\left[ \left<X^{\otimes 2}, X^{\prime \otimes 2}\right> \right]^{1/2} \EE\left[ \left<X^{\otimes 2}, X^{\prime \otimes 2}\right> \|X\|^2 \|X'\|^2 \right]^{1/2} \\
& \leq \|\EE[X^{\otimes 2}]\| \|\EE[X^{\otimes 2} \|X\|^2]\| \\
& \leq \|\EE[I_d]\| \|\EE[X^{\otimes 2} \|X\|^2]\| \\
& \leq \sqrt{d}  \|\EE[X^{\otimes 2} \|X\|^2]\|.
\end{align*}

\end{proof}

\begin{lemma}
\label{lem:third}
Let $X$ be a centered random variable on $\mathbb{R}^d$ with identity covariance matrix and suppose there exists $\tau : \mathbb{R}^d \rightarrow (\mathbb{R}^d)^{\otimes 2}$ such that, for any smooth test function $\test : \mathbb{R}^d \rightarrow \mathbb{R}^d $, 
\beq
\label{eq:steinKernel}
\EE[\left<X, \test(X)\right>] = \EE[\left<\tau(X), \nabla \test(X)\right>].
\eeq
Then
\[
\|\EE[X^{\otimes 3}]\| \leq 2 \EE[\|\tau(X) - I_d\|^2]^{1/2}.
\]
\end{lemma}

\begin{proof}
The proof follows the proof of Equation (3.14) \cite{klartag}, except we use (\ref{eq:steinKernel}) in place of the Poincaré inequality. 

Let $B = \EE[X^{\otimes 3}]$. We have 
\[
\|B\|^2 = \EE\left[\left<X, B X^{\otimes 2}\right>\right].
\]
By definition of $\tau$, we obtain 
\begin{align*}
\|B\|^2 & = \EE\left[\left<\tau(X), \nabla (B X^{\otimes 2})\right>\right] \\
& = 2 \EE\left[\left<\tau(X),  B X\right>\right],
\end{align*}
where
\[
(BX)_{i,j} = \sum_{k=1}^d B_{i,j,k} X_k.
\]
Since $X$ is centered, we have $\EE[B X] = 0$ and
\[
\|B\|^2 = 2 \EE\left[\left<\tau(X) - I_d,  B X\right>\right].
\]
Finally, by Cauchy-Schwarz's inequality and since $\EE[X^{\otimes 2}] = I_d$, 
\begin{align*}
\|B\|^2 &\leq  \EE[\|\tau(X) - I_d\|^2]^{1/2} \EE[\|B X\|^2]^{1/2} \\
& \leq 2\|B\| \EE[\|\tau(X) - I_d\|^2]^{1/2}, 
\end{align*}
concluding the proof.
\end{proof}

\bibliographystyle{amsplain}
\bibliography{biblio}

\end{document}